\documentclass[centertags,11pt]{amsart}
\usepackage[top=4.3cm,bottom=4.3cm,left=3.3cm,right=3.3cm]{geometry}               
\usepackage{enumerate}
\usepackage{amsmath}
\usepackage{amssymb,latexsym}
\usepackage{amsthm}
\usepackage{color}
\usepackage{setspace}
\usepackage{graphicx}
\usepackage{hyperref}
\usepackage{amscd}

\DeclareMathOperator*{\Gal}{Gal}
\DeclareMathOperator*{\Hom}{Hom}

\theoremstyle{definition}
\newtheorem{definition}{Definition}[section]
\newtheorem{remark}[definition]{Remark}

\newtheorem{theorem}[definition]{Theorem}
\newtheorem{proposition}[definition]{Proposition}
\newtheorem{lemma}[definition]{Lemma}
\newtheorem{corollary}[definition]{Corollary}

\newtheorem*{theorem*}{Theorem}

\onehalfspacing{}

\begin{document}

\title{Algebraic constructions of complete $m$-arcs}


\author[D. Bartoli]{Daniele Bartoli}
\address{
Dipartimento di Matematica e Informatica, Universit\`a degli Studi di Perugia}
	\email{daniele.bartoli@unipg.it}
\author[G. Micheli]{Giacomo Micheli}
\address{Department of Mathematics, University of South Florida, Tampa, FL 33620 (US)}
	\email{gmicheli@usf.edu}

\begin{abstract}
Let $m$ be a positive integer, $q$ be a prime power, and $\mathrm{PG}(2,q)$ be the projective plane over the finite field $\mathbb F_q$. Finding complete $m$-arcs in $\mathrm{PG}(2,q)$ of size less than $q$ is a classical problem in finite geometry.
In this paper we give a complete answer to this problem when $q$ is relatively large compared with $m$, explicitly constructing the smallest $m$-arcs in the literature so far for any $m\geq 8$. For any fixed $m$, our arcs $\mathcal A_{q,m}$ satisfy $|\mathcal A_{q,m}|-q\rightarrow -\infty$ as $q$ grows.
To produce such $m$-arcs, we develop a Galois theoretical machinery that allows the transfer of geometric information of points external to the arc, to arithmetic one, which in turn allows to prove the $m$-completeness of the arc.
\end{abstract}

\maketitle

\section{Introduction}
Let $p$ be a prime, $h$ be a positive integer, and $q=p^a$. Denote by $\mathrm{PG}(2,q)$ the projective Galois plane over the field $\mathbb{F}_q$ with $q$ elements. The symbol $\overline{\mathbb{F}_q}$ denotes the algebraic closure of $\mathbb{F}_q$. The corresponding affine plane of order $q$ is denoted by $\mathrm{AG}(2,q)=\mathrm{PG}(2,q)\setminus \ell_{\infty}$, where $\ell_{\infty}$ is the line at infinity.  A $(k, m)$-arc $\mathcal{A}$ in $\mathrm{PG}(2,q)$ is a set of $k$ points no $m + 1$ of which are collinear and such that there exist $m$ collinear points.  The arc $\mathcal{A}$ is said \emph{complete} if it is maximal with respect to the set theoretical inclusion. A general introduction to $(k, m)$-arcs can be found in the monograph \cite[Chapter 12]{10}, as well as in the survey paper \cite[Section 5]{12}. 

Arcs in projective planes have interesting connection with Coding Theory too: complete $(k, m)$-arcs correspond to linear $[k, 3, k-m]_q$-codes which cannot be extended to a code with the same dimension and minimum distance.

As a definition, we say that a set of points $\mathcal{A}\subset \mathrm{PG}(2,q)$ $m$-covers (or simply covers, if the context is clear) an external point $P\in \mathrm{PG}(2,q)\setminus \mathcal{A}$ if there exist $P_1,\ldots,P_m$ pairwise distinct points of $\mathcal{A}$ such that $P_1,\ldots,P_m,P$ are collinear. This is equivalent to say that there exists an $m$-secant to $\mathcal{A}$ passing through $P$. 

Concerning the smallest interesting case $m=2$, the theory is well developed and quite rich of constructions; see e.g. \cite{1,2,3,9,12,18,19} and the references therein, as well as \cite[Chapters 8-10]{10}.  The notion of arcs (with $m=2$) has been introduced by Segre \cite{Segre1,Segre2} in the late 1950's. He was also interested  to know how many points a complete arc in a projective plane of order $q$ can have; see \cite{Segre1}. Much is known about the maximum sizes: the upper bound is $q+1$ if $q$ is odd and $q+2$ if $q$ is even and both are actually attained. In the case of equality, in odd characteristic, Segre himself proved that the points of the arc form a conic, whereas in the even characteristic case the classification of such arcs is not completed. The second largest cardinality of a  $(k, 2)$-arc has been widely investigated too. Much less is known about the smallest possible size. Probabilistic constructions of small complete arcs have been obtained in the seminal paper \cite{KV}, where the authors prove the existence of a complete arc of size $O(\sqrt{q}\log^c q)$ in a projective plane (not necessarily $\mathrm{PG}(2,q)$) of order $q>M$, for some positive constants $c,M$ independent on $q$. All the other existence results obtained so far in the literature  either come from computer searches, or follow a suggestion of Segre and Lombardo-Radice; \cite{15,16}. Their idea was to choose a special ``small" set $\mathcal{A}$ of points from a fixed algebraic curve  and then to complete it   adding a ``few" more extra points. This machinery has been intensively used in the last decades: usually, the collinearity of two points $Q$ and $R$ of $\mathcal{A}$ and a point $P\notin \mathcal{A}$ is translated into the existence of a suitable $\mathbb{F}_q$-rational point of a certain  $\mathbb{F}_q$-rational curve $\mathcal{C}_{\mathcal{A}}$ related to $\mathcal{A}$; see for instance \cite{DiComite,DiComite2,Zirilli}. In this direction, investigation of the curve $\mathcal{C}_{\mathcal{A}}$ (in particular the existence of a suitable $\mathbb{F}_q$-rational absolutely irreducible component)  and Hasse-Weil Theorem represent fundamental steps; see \cite{K,Voloch1,Voloch2,Schoof,Szonyi,Giulietti}. The smallest possible size of complete arcs in projective planes $\mathrm{PG}(2,q)$ obtained using such algebraic methods is $O(q^{3/4})$; see \cite{Szonyi2}.

Almost nothing is known for the case $m>2$: sporadic  infinite families either  arise from the theory of $2$-character sets in $\mathrm{PG}(2, q)$  (see \cite[Sects. 12.2 and 12.3]{10} and \cite{8}) or  consist of the set of $\mathbb{F}_q$-rational points of some cubic curves \cite{7,14,20,Bartoli2016} (for $m=3$) or  curves of degree $m$  for specific $m$; see \cite{6}.  For $m = 3$ and $m=4$ smaller complete $m$-arcs of size $o(q)$ have been recently constructed in \cite{4,BSZ}; they consist of a suitable subset of $\mathbb{F}_q$-rational points of a curve of degree $m+1$.

In this paper we construct infinite families of complete $m$-arcs of size in the Hasse-Weil range, whose points are chosen, apart from few exceptions, among  the $\mathbb{F}_q$-rational points of curves of degree $m$.

The aim of our work is two-fold. On the one hand we show an effective method to provide examples of complete $m$-arcs from algebraic curves based on the investigation of the Galois closure of specific function field extensions.  On the other hand,  as an application of our machinery, we  show the existence of specific families of curves which provide examples of complete $m$-arcs in $\mathrm{PG}(2,q)$, with $q$ large enough with respect to $m$, of size in the interval $[q-2\lfloor m/2\rfloor \sqrt{q} +C_m,q+2\lfloor m/2\rfloor \sqrt{q} +C_m]$, where $C_m$ is a constant depending only on $m$. Families of $m$-arcs $\mathcal{A}$ whose  size satisfies  $0<\#\mathcal{A} -(q-2\lfloor m/2\rfloor \sqrt{q})=O(m)$ are provided in  Theorems \ref{Th:Asymptotic} and \ref{Th:minimal}. These complete $m$-arcs are the  smallest known in the literature so far. Of course, for certain particular values of $m$ and $q$ we expect that it will be possible to construct smaller $m$-arcs, but nevertheless our construction provides a general upper bound.

We also expect that the method can also be helpful in the construction of complete $m$-arcs of size $o(q)$ starting from  subsets of $\mathbb{F}_q$-rational points of curves of degree $m+1$, as already done for the cases $m=3$ and $m=4$ in \cite{4,BSZ}. 

More in details, our machinery can be summarized as follows. 

\begin{enumerate}
\item[(i)] Consider an irreducible curve $\mathcal{C}_f : f(x,y)=0$, where $f(x,y) \in \mathbb{F}_q[x,y]$ has degree $m$. Let $P\in \mathrm{AG}(2,q) \setminus \mathcal{C}_f(\mathbb{F}_q)$ be an affine point not belonging to $\mathcal{C}_f$. 
\item[(ii)] Let $\ell_{P,t} : y=t(x-a)+b$ be a line through $P$. Consider $F_{a,b}(t,x)=F_{P}(t,x)=f(x,t(x-a)+bx)$ and let $G^{geom}_{a,b}=\Gal(F_P(t,x):\overline{\mathbb{F}}_q(t))$.
\item[(iii)] Consider now the set of pairs $(a,b)\in \mathbb F_q^2$ such that $G^{geom}_{a,b}=\mathcal{S}_m$, the symmetric group over $m$ elements. Clearly, this forces $G^{arith}_{a,b}=\Gal(F_P(t,x):\mathbb{F}_q(t))=\mathcal S_n$ as $G^{geom}_{a,b}\leq G^{arith}_{a,b}$. If $q$ is large enough with respect to $m$, using Chebotarev density Theorem \cite{kosters2014short} one obtains the existence of  a specialization $t_0\in \mathbb{F}_q$ for which  $F_{a,b}(t_0,x)$ splits into $m$ pairwise distinct  linear factors $(x-x_i)$ defined over $\mathbb{F}_q$. This means that the intersection between $\ell_{P,t_0}$ and $\mathcal{C}_f$ consists of $m$ distinct $\mathbb{F}_q$ rational points $P_i=(x_i,t_0(x_i-a)+b)$ and therefore the point $P$ is $m$-covered by $\mathcal{C}_f(\mathbb{F}_q)$.
\item[(iv)] In order to deal with the set of all the points $(a,b)\in \mathrm{AG}(2,q)$ such that $G^{geom}_{a,b}\neq \mathcal{S}_m$,  we will have to add an additional set of points $\Gamma$ to $\mathcal C_f$. Practically, $\Gamma$ is contained in the union of a relatively small number of $\mathbb{F}_q$-rational lines and therefore we only need to add at most $O(m)$ points.
\end{enumerate}

One of the advantages of this method is that it allows us to  transfer geometric information to arithmetic one. In fact, we work in the algebraic closure of $\mathbb F_q$ to extract group theoretical properties of the geometric Galois group $G^{geom}$. By Galois correspondence, this group is contained in any arithmetic Galois group and  this will allow to extract properties of the pair  $(G^{geom},G^{arith})$.
In this case we simply want $G^{arith}=G^{geom}$ as we want the existence of a totally split place via Chebotarev Density Theorem.

In principle, any curve $\mathcal{C}_f$ of degree $m$ could be used to construct complete $m$-arcs, even if technical issues in {(iii)} can be hard to solve. In this paper, we first show how to produce complete $m$-arcs starting from the $\mathbb{F}_q$-rational points of the curve $y=x^m$ in order to better explain this method. All the complete $m$-arcs obtained in this way have size larger than $q$; see Corollary \ref{Cor:ArcToy}. 

The construction of examples of size smaller than $q$ needs the use of curves of genus larger than $0$. Therefore, in the second part of the paper, we consider curves of type $y^2=g(x)/x^{n}$, where $n<m-2$ and  $g(x)$ squarefree; see Theorems \ref{Th:main} and \ref{thm:chargreaterm}. Finally, in Section \ref{Section:5} we collect asymptotic results together with explicit examples of curves providing small complete $m$-arcs.

\section{Galois groups and Chebotarev Density Theorem}
In this section we briefly recall the Galois theoretical part of our approach which deals with totally split places. This method was successfully used also in \cite{micheli2019constructions} to build optimal locally recoverable codes of large dimension. Similar ideas were also applied to classify permutation rational functions in \cite{ferraguti2018full} and to prove factorization properties of polynomials over finite fields \cite{micheli2019selection}.

For a function field $M/\mathbb F_q$, we say that the field of constants of $M$ is the set of all elements of $M$ that are algebraic over $\mathbb F_q$.

As usual, when one wants to deal with spliting condition from a Galois theoretical perspective, one makes use of the following classical fact from algebraic number theory, whose proof can be found for example in \cite{guralnick2007exceptional}.

\begin{lemma}\label{orbits}
Let $L:K$ be a finite separable extension of function fields, let $M$ be its Galois
 closure and $G:= \Gal(M:K)$ be its Galois group. 
 Let $P$ be a place of $K$ and $\mathcal Q$ be the set of places of $L$ lying above $P$.
Let $R$ be a place of $M$ lying above $P$. Then we have the following:
\begin{enumerate}
\item There is a natural bijection between $\mathcal Q$ and the set of orbits of $H:=\mathrm{Hom}_K(L,M)$ under the action of the decomposition group $D(R|P)=\{g\in G\,|\, g(R)=R\}$.
\item  Let $Q\in \mathcal Q$ and let $H_Q$ be the orbit of $D(R|P)$ corresponding to $Q$. Then $|H_Q|=e(Q|P)f(Q|P)$ where $e(Q|P)$ and $f(Q|P)$ are ramification index and relative degree, respectively. 

\item The orbit $H_Q$ partitions further under the action of the inertia group $I(R|P)$ into $f(Q|P)$ orbits of size $e(Q|P)$. 

\end{enumerate}
\end{lemma}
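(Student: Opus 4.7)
The plan is to reduce the entire statement to the orbit--stabilizer theorem applied to the natural left action of $G$ on $H=\Hom_K(L,M)$. Fixing the inclusion $\iota:L\hookrightarrow M$ as basepoint and extending each $\sigma\in H$ to an element $\tilde\sigma\in G$ produces a $G$-equivariant bijection $H\to G/U$ with $U=\Gal(M:L)$ (using separability of $L:K$ and the fact that $M$ contains a normal closure of $L:K$), under which $G$ acts by left multiplication. The stabilizer of the coset $\tilde\sigma U$ is then $\tilde\sigma U\tilde\sigma^{-1}=\Gal(M:\sigma(L))$; this identification will be the computational workhorse of the argument.

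For part (1), I would introduce the map $\Phi:H\to\{\text{places of }L\text{ above }P\}$, $\sigma\mapsto \sigma^{-1}(R\cap\sigma(L))$. Since any $g\in D(R|P)$ satisfies $g(R)=R$, a direct computation gives $\Phi(g\circ\sigma)=\Phi(\sigma)$, so $\Phi$ factors through $D(R|P)\backslash H$. Surjectivity uses that $G$ acts transitively on places of $M$ above $P$: for $Q\mid P$, pick $R'\mid Q$, write $R'=g^{-1}(R)$ for some $g\in G$, and check $\Phi(g\circ\iota)=R'\cap L=Q$. Injectivity modulo $D(R|P)$ rests on the same transitivity applied to $M/\sigma(L)$: if $\Phi(\sigma)=\Phi(g\sigma)$ then $R$ and $g^{-1}(R)$ restrict to the same place of $\sigma(L)$, hence they are $\Gal(M:\sigma(L))$-conjugate, and unwinding the identifications places $g$ inside $D(R|P)\cdot\Gal(M:\sigma(L))$, i.e., $g\sigma U$ lies in the $D(R|P)$-orbit of $\sigma U$.

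For (2), the stabilizer of $\sigma U$ inside $D(R|P)$ is $D(R|P)\cap\Gal(M:\sigma(L))$, which by definition equals the decomposition group $D(R|Q'_\sigma)$, where $Q'_\sigma:=R\cap\sigma(L)$. Combining orbit--stabilizer with the standard order formula $|D(R|P)|=e(R|P)f(R|P)$ and multiplicativity of $e,f$ in the tower $R\mid Q'_\sigma\mid P$ yields
\[
|H_Q|=\frac{|D(R|P)|}{|D(R|Q'_\sigma)|}=\frac{e(R|P)f(R|P)}{e(R|Q'_\sigma)f(R|Q'_\sigma)}=e(Q|P)f(Q|P).
\]
Part (3) falls out of rerunning the same computation with $I$ in place of $D$: the stabilizer of $\sigma U$ inside $I(R|P)$ is $I(R|Q'_\sigma)$, of order $e(R|Q'_\sigma)$, so each orbit inside $H_Q$ has size $|I(R|P)|/|I(R|Q'_\sigma)|=e(Q|P)$ and their number is $|H_Q|/e(Q|P)=f(Q|P)$.

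The main obstacle of the whole argument is the injectivity step in (1), where the interaction between the places of $M$, $\sigma(L)$, and $L$ has to be carefully reconciled with the coset description of $H$ and with the chosen prolongation $R$ of $P$; the remaining content is bookkeeping built on orbit--stabilizer together with the standard order formulas $|D|=ef$ and $|I|=e$ for Galois extensions of function fields and the multiplicativity of ramification indices and relative degrees in towers.
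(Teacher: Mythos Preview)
Your argument is correct and is precisely the standard orbit--stabilizer reduction one finds in the references; there is nothing to compare it to, because the paper does not prove this lemma at all. The authors state it as a classical fact and simply refer the reader to Guralnick--Tucker--Zieve, \emph{Exceptional covers and bijections on rational points}, for a proof. Your write-up (identifying $H$ with $G/U$, defining $\Phi(\sigma)=\sigma^{-1}(R\cap\sigma(L))=\tilde\sigma^{-1}(R)\cap L$, and then reading off orbit sizes from $|D|=ef$, $|I|=e$ together with multiplicativity in towers) is exactly the kind of argument those references contain, and the injectivity step you flag as the main obstacle is handled correctly via the transitivity of $\Gal(M:\sigma(L))$ on the places of $M$ above a fixed place of $\sigma(L)$.
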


The following can also be deduced by \cite{kosters2014short}; for completeness, we include a proof here for the case we need.
\begin{theorem}\label{thm:existence_tot_split}
Let $p$ be a prime number, $m$ a positive integer, and $q=p^m$.
Let $L:F$ be a separable extension of global function fields over $\mathbb F_q$ of degree $n$,  $M$ be the Galois closure of $L:F$, and suppose that the field of constants of $M$ is $\mathbb F_q$.
There exists an explicit constant $C\in \mathbb R^+$ depending only on the genus of $M$ and the degree of $L:F$ such that if $q>C$ then $L:F$ has a totally split place.
\end{theorem}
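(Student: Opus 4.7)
The plan is to reduce the claim to an effective Chebotarev density theorem for global function fields, applied to the identity conjugacy class in $G:=\Gal(M:F)$. By Lemma~\ref{orbits}, if a place $R$ of $M$ over $P$ has trivial decomposition group $D(R|P)=\{1\}$, then the action of $D(R|P)$ on $H=\Hom_F(L,M)$ decomposes into $n$ singleton orbits. Parts (2) and (3) of the lemma then give $n$ distinct places $Q$ of $L$ above $P$ with $e(Q|P)=f(Q|P)=1$, i.e.\ $P$ is totally split in $L$. Hence it suffices to produce a degree-one place of $F$, unramified in $M$, whose Frobenius is trivial in $G$.

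For this I would invoke the effective form of the Chebotarev density theorem for function fields (essentially a consequence of Weil's Riemann hypothesis applied to the $L$-functions of characters of $G$; see \cite{kosters2014short} and the references therein). Under the hypothesis that $\mathbb{F}_q$ is the full constant field of $M$, it gives
$$\left|\, \#\{P : \deg P = 1,\ \text{unramified in } M,\ \sigma_P = 1 \} - \frac{q+1}{|G|}\, \right| \leq \frac{2 g_M \sqrt{q}}{|G|} + E,$$
where $g_M$ is the genus of $M$ and $E$ is a lower-order correction accounting for the ramified places of $M:F$ and for $|G|$. Since $|G|$ divides $n!$ and the number of places of $F$ that ramify in $M$ can itself be bounded in terms of $g_M$ and $n$ (via Riemann--Hurwitz), every quantity on the right is controlled solely in terms of these two parameters.

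The conclusion is then elementary: once $q$ exceeds an explicit constant $C$ of the shape $C=(2 g_M+\kappa(g_M,n))^2$, the right-hand side is strictly smaller than $(q+1)/|G|$, so at least one rational place with trivial Frobenius exists. Any such $P$ splits completely in $M$, hence in $L$ by the first paragraph, as required.

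The main technical point, in my view, is the role of the hypothesis on the constant field of $M$. Without it, the quotient $\Gal(M\mathbb{F}_{\overline{q}}:M)$ is nontrivial and the Frobenius of a degree-one place of $F$ is forced to land in a nontrivial coset, so the identity conjugacy class in $G$ may correspond to no rational place at all, no matter how large $q$ grows. With the geometric--arithmetic Galois groups coinciding, on the other hand, the argument is a direct specialization of standard function-field Chebotarev; the rest of the work is simply bookkeeping of the constants.
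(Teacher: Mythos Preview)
Your proposal is correct and follows essentially the same route as the paper. The paper, rather than quoting Chebotarev as a black box, carries out the special case for the identity class by hand: it applies Hasse--Weil directly to $M$ to lower-bound the number of degree-one places of $M$, observes that over each unramified totally split place of $F$ exactly $|G|$ of these lie, and bounds the ramified contribution via Riemann--Hurwitz for $L:F$ (obtaining $|R|\le 2(g_L+n)$, which is indeed controlled by $g_M$ and $n$ since $g_L\le g_M$). Your Chebotarev inequality is exactly what this direct count yields; the paper itself notes that the statement can be deduced from \cite{kosters2014short}, so your framing is the one the authors had in mind as the alternative.
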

\begin{proof}
First, observe that $L:F$ has a totally split place if and only if its Galois closure $M:F$ has a totally split place \cite[Lemma 3.9.5]{17}.
So we are left with counting the degree $1$ totally split places of $F$ in $M:F$ and ensure that there is at least one. By looking at Lemma \ref{orbits}, a place $P\subseteq F$ is totally split in $M:F$ if and only if $D(R|P)=\{\mathrm{Id}\}$ for any place $R$ of $M$ lying above $P$ (all the decomposition groups are conjugate when $R$ varies over $P$).
This means that we have to count the unramified places of $F$ of degree $1$ such that the identity is a Frobenius. Suppose that the  set of totally split places of $F$ is $T$, then
 every element of $T$ lies below a place of degree one of $M$. Moreover, every place of degree $1$ of $M$ lies above a place of degree $1$ of $F$ simply by multiplicativity of relative degree. Also, above every unramified place of degree $1$, say $P$, of $T$, exactly $|G|$ unramified places of degree $1$ of $M$ lie above $P$ and viceversa, because the extension is Galois.
 
Now, by Hasse-Weil Theorem the places $\mathcal P^1(M)$ of degree $1$ of $M$  are at least
$q+1-2g_M\sqrt q$. By the consideration above we also have that $|\mathcal P^1(M)|=|T|\cdot |G|+ |R|$ where $R$ are the ramified places, so the number of totally split places satisfies
$|T|\geq (q+1-2g_M\sqrt q-|R|)/|G|$.

Let us now give an upper bound for $|R|$. First, observe that a place of $F$ ramifies in $L:F$ if and only if it ramifies in $M:F$ by standard Galois theory. 
Second, observe that $|R|$ can be upper bounded with the number of ramified  places of $L:F$ over $\overline {\mathbb F}_q$ (so that the degree of every place becomes $1$). Using \cite[Corollary 3.5.6]{17} we obtain that
\[2g_L\geq 2g_L-2\geq n(2g_F-2)+\sum_{P\subseteq F} \underbrace{\sum_{P'\mid P} (e(P'|P)-1)}_{\geq 1}\geq -2n+|R|.\]
Therefore, we get $|R|\leq 2(g_L+n)$.

Finally, the geometric  genus $g_M$ is independent of the size of the base field:  there exists a constant $C=C(g_M,n)$ such that if $q>C$ then 
\begin{equation}\label{eq:totsplitcond}
(q+1-2g_M\sqrt q-2(g_L+n))/|G|\geq 1,
\end{equation}
ensuring the existence of a totally split place.
The constant can be made explicit by upper bounding $|G|$ with $n!$, and  $g_M$ using recursively Castelnuovo inequality \cite[Theorem 3.11.3]{5} (in fact, $M$ can be seen as the compositum of the images of $L$ via the elements of $G$).
\end{proof}

\begin{corollary}\label{cor:constant}
If $p>n$, the constant $C$ appearing in Theorem \ref{thm:existence_tot_split}  can be chosen to be equal to $9(g_F+g_L+n)^2(n!)^2$.
\end{corollary}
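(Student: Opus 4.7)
The strategy is to plug an explicit bound on $g_M$ into inequality~\eqref{eq:totsplitcond} from the proof of Theorem~\ref{thm:existence_tot_split}, and to observe that the hypothesis $p>n$ produces such a bound cleanly. Since $|G|=[M:F]$ divides $n!$ and $p>n$ forces $p\nmid n!$, the order of every inertia group in $G$ is coprime to $p$; consequently $M:F$ is tamely ramified at every place, and each different exponent equals $e(R|P)-1$.

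I would then apply the Riemann--Hurwitz formula to $M:F$ itself. Combining the fundamental identity $\sum_{R\mid P} e(R|P)f(R|P)\deg R=|G|\deg P$ (which yields $\sum_{R\mid P}(e(R|P)-1)\deg R\leq |G|\deg P$) with the fact that a place of $F$ ramifies in $M$ if and only if it ramifies in $L$ (standard for Galois closures), and with the bound on the number of ramified places already established inside the proof of Theorem~\ref{thm:existence_tot_split} (at most $2(g_L+n)$ after extending scalars to $\overline{\mathbb F}_q$), one obtains
\[
\deg\mathrm{Diff}(M:F)\;\leq\; 2|G|(g_L+n),\qquad\text{hence}\qquad g_M\;\leq\; |G|(g_F+g_L+n)\;\leq\; n!\,(g_F+g_L+n).
\]

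What remains is elementary algebra. Setting $A:=g_F+g_L+n$ and $B:=n!$, condition~\eqref{eq:totsplitcond} is implied by $q-2AB\sqrt{q}\geq 2A+B$. Under $q>9A^2B^2$ one has $\sqrt{q}>3AB$, so
\[
q-2AB\sqrt{q}\;=\;\sqrt{q}\,(\sqrt{q}-2AB)\;>\;3AB\cdot AB\;=\;3A^2B^2,
\]
and $3A^2B^2\geq 2A+B$ holds for all $A,B\geq 1$. This yields the desired constant $C=9(g_F+g_L+n)^2(n!)^2$.

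The substantive step is the genus bound: without the hypothesis $p>n$, the only available route sketched in Theorem~\ref{thm:existence_tot_split} is an iteration of Castelnuovo's inequality across the $\leq n!$-to-one compositum $M=L_1\cdots L_k$, which produces an estimate on $g_M$ that blows up super-exponentially in $n$. The role of $p>n$ is precisely to force tameness of the \emph{Galois closure} (not merely of $L:F$), so that a single application of Riemann--Hurwitz on $M:F$ delivers a bound linear in $n!$, which is what makes the clean constant $9(g_F+g_L+n)^2(n!)^2$ attainable.
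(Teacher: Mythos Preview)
Your proof is correct and follows essentially the same approach as the paper: both use $p>n$ to ensure tameness of $M:F$, apply Riemann--Hurwitz to the Galois closure together with the ramified-place bound $|R|\leq 2(g_L+n)$ from Theorem~\ref{thm:existence_tot_split} to obtain $g_M\leq n!(g_F+g_L+n)$, and then verify that \eqref{eq:totsplitcond} holds once $q>9(g_F+g_L+n)^2(n!)^2$. Your write-up is slightly more explicit than the paper's in justifying the different-degree bound and in carrying out the final algebraic verification, but the argument is the same.
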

\begin{proof}
We simply have to make the estimate on $g_M$ in the proof of Theorem \ref{thm:existence_tot_split}  explicit using Hurwitz genus formula in the tame case \cite[Corollary 3.5.6]{17}, which is allowed because $p>n$  (and therefore none of the ramification indexes can be divided by $p$).
Whenever $p> n$, by Hurwitz genus formula and Hilbert's Different Exponent Theorem we have that 
\[2g_M-2= [M:F](2 g_F-2)+\sum_P \sum_{P'\mid P} (e(P'\mid P)-1).\]
Since now $[M:F]=|G|\leq n!$, we have that
\[2g_M\leq n!2 g_F+n! |R|\leq 2(g_F + g_L+n)(n!),\]
so that $g_M\leq (g_F + g_L+n)(n!)$.

For example, we can take $C\geq 9(g_F+g_L+n)^2(n!)^2$ so that \eqref{eq:totsplitcond} is verified.

\end{proof}

\begin{remark}
It is surely possible to refine the constant in Corollary \ref{cor:constant} at the price of having more complicated expression. We prefer to avoid it for the sake of readability.
\end{remark}

Another result we will use is Theorem  \ref{thm:hemultsn} below, which is purely group theoretical. We report it here for the reader's convenience:
\begin{theorem}\label{thm:hemultsn}\cite[Theorem 3.9]{Helmut}
Let $r$ be a prime and $G$ be a primitive group
of degree $n = r + k$ with $k \geq 3$. If $G$ contains an element of
degree and order $r$ (i.e. an $r$-cycle), then $G$ is either alternating or symmetric.
\end{theorem}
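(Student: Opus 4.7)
Here is the plan I would pursue for this classical Jordan-style theorem. Let $\sigma\in G$ denote the given $r$-cycle, with support $\Delta\subset\Omega$ of size $r$ and fixed-point set $\Sigma=\Omega\setminus\Delta$ of size $k\geq 3$. Because $r$ is prime, $\langle\sigma\rangle$ is already transitive (hence primitive) on $\Delta$, so the pointwise stabilizer $G_{(\Sigma)}$ acts transitively on $\Delta$; equivalently, $\Delta$ is a \emph{Jordan set} for $G$ in the sense of Wielandt.

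The core of the argument is to bootstrap primitivity into the existence of a $3$-cycle in $G$, and then invoke the classical result that a primitive group containing a $3$-cycle contains $A_n$. First I would use primitivity of $G$ to produce a conjugate with overlapping support: since $2\leq |\Delta|<n$, the set $\Delta$ is not a block, so there exists $g\in G$ with $\emptyset\subsetneq g\Delta\cap\Delta\subsetneq\Delta$. Setting $\sigma'=g\sigma g^{-1}$ yields a second $r$-cycle whose support $g\Delta$ meets $\Delta$ in $j$ points with $1\leq j\leq r-1$.

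Next I would apply a commutator/conjugation reduction to $\sigma$ and $\sigma'$ to extract a nontrivial element of strictly smaller support (a suitable element of the form $[\sigma^{a},(\sigma')^{b}]$ moves only points in $\Delta\cup g\Delta$, and by choosing $a,b$ relative to $j$ one kills enough of the overlap to lose at least one point from the support). Iterating this reduction — feeding the smaller element back into the primitivity step to locate a new overlapping conjugate — one descends through cycle structures of decreasing support until a $3$-cycle appears in $G$. The hypothesis $k\geq 3$ is exactly what keeps enough room outside the current support at every stage for the primitivity step to apply and for the descent not to stall.

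The main obstacle I foresee is the combinatorial bookkeeping in this iterated support-reduction: one must verify that at each stage the new element is nontrivial, that the support size strictly drops, and that the descent does not terminate at an imprimitive subsystem before reaching a $3$-cycle — this is precisely where the sharp bound $k\geq 3$ (as opposed to $k\geq 2$) enters. Once a $3$-cycle is produced inside $G$, the classical theorem of Jordan (the normal closure of a $3$-cycle in a primitive group is $A_n$) forces $A_n\leq G\leq S_n$, which is the desired conclusion.
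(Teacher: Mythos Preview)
The paper does not prove this statement: it is quoted from Wielandt's monograph (the citation \cite[Theorem~3.9]{Helmut}) and invoked as a black box. There is therefore no ``paper's own proof'' to compare your attempt against; one can only measure your outline against the classical argument.

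Your plan is in the right Jordan-style spirit, and both the opening (recognising $\Delta$ as a Jordan set for $G$) and the endgame (a primitive group containing a $3$-cycle contains $A_n$) are correct. The genuine gap is the support-reduction step. For two $r$-cycles $\sigma$ and $\sigma'=g\sigma g^{-1}$ whose supports meet in $j$ points, any commutator $[\sigma^a,(\sigma')^b]$ has support contained in $\Delta\cup g\Delta$, a set of size $2r-j$; a direct count shows this support is at most about $3j$ when $j\le r/2$, but for large overlap it can be as big as $2r-j>r$. Primitivity alone only furnishes \emph{some} $g$ with $1\le j\le r-1$, not one with $j$ small, so the descent as you describe it can stall at the very first step. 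This is not bookkeeping but a missing idea: commutators of overlapping $r$-cycles need not have support below $r$, and nothing in your outline forces $j$ into the favorable range. Wielandt's actual argument proceeds differently, through the theory of Jordan complements developed earlier in the same chapter: one shows that the primitive Jordan set $\Delta$ can be enlarged one point at a time, forcing $G$ to be $(k+1)$-transitive, and then concludes $A_n\le G$ from this high transitivity together with the $r$-cycle. If you want to rescue a direct descent, the useful elements are typically products $\sigma(\sigma')^{-1}$ with overlap $j=r-1$ rather than commutators, but arranging $j=r-1$ already requires $2$-transitivity --- which is precisely what the Jordan-set machinery supplies.
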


\section{Complete $m$-arcs from rational curves $y=x^m$} 
First, we show how use the machinery {(i)}-{(iv)} in the easiest possible case, that is when $\mathcal{C}_f$ is the rational curve of degree $m$ of equation $y-x^m=0$. For a fixed point $P=(a,b)\in AG(2,q)$ let $\ell_{a,b,t} : y=t x-t a+b$, be a line through $P$ with slope $t$. Let $\overline{\mathbb{F}_q}$ denote the algebraic closure of $\mathbb{F}_q$. 

First we prove a useful geometrical feature for the curve $\mathcal{C}_f$.

The following proposition ensures that the specialization of $x^m-t x+t a-b$ at a certain $\overline t\in \overline{\mathbb F_q}$ has exactly one factor of multiplicity  $2$, and all the other factors with multiplicity $1$.

\begin{proposition}\label{Th:DoublePointToy}
Let $m\geq 5$, $p\nmid m(m-1)$. For any $P=(a,b)\in AG(2,q)$, $b\neq 0$, there exists $t \in \overline{\mathbb{F}_q}$ such that $|\mathcal{C}_{f} \cap\ell_{a,b,t}|=m-1$. 
\end{proposition}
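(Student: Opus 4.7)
The plan is to translate the geometric statement into a question about multiple roots of a single-variable polynomial and then control those roots carefully. The intersections of $\mathcal{C}_f$ with $\ell_{a,b,t}$ correspond bijectively to the roots in $\overline{\mathbb{F}_q}$ of
\[F_t(x) := x^m - tx + (ta - b),\]
so $|\mathcal{C}_f \cap \ell_{a,b,t}| = m-1$ is equivalent to $F_t$ having exactly one double root together with $m-2$ simple roots. My strategy is to first rule out the ``wrong'' kinds of multiple roots for all $t$, and then to produce a single $t$ for which $F_t$ has any double root at all.

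For the first half I would establish two auxiliary facts. First, no root of $F_t$ can have multiplicity $\geq 3$: since $F_t''(x) = m(m-1)x^{m-2}$ and $p \nmid m(m-1)$, such a root would have to be $x=0$, but $F_t(0) = ta-b$ and $F_t'(0) = -t$ cannot vanish simultaneously without forcing $b=0$. Second, for each $t$ the polynomial $F_t$ has at most one double root: if $x_0 \neq x_0'$ were two double roots, then $F_t'(x_0) = F_t'(x_0') = 0$ gives $m x_0^{m-1} = m (x_0')^{m-1} = t$, so using $x_0^m = tx_0/m$ and $(x_0')^m = tx_0'/m$ in $F_t(x_0) - F_t(x_0') = 0$ yields $-\tfrac{m-1}{m}\, t(x_0 - x_0') = 0$; this forces $t = 0$ (using $p \nmid m-1$), hence $x_0 = 0$, contradicting $F_t(0) = -b \neq 0$.

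For the second half I would parametrize by the prospective double root. If $x_0$ is such a double root then $t = m x_0^{m-1}$ (from $F_t'(x_0) = 0$), and substituting into $F_t(x_0) = 0$ gives
\[g(x_0) := (1-m) x_0^m + m a \, x_0^{m-1} - b = 0.\]
Since $p \nmid (m-1)$ the polynomial $g$ has degree exactly $m$, so it admits a root $x_0 \in \overline{\mathbb{F}_q}$; and because $g(0) = -b \neq 0$, this root is nonzero and yields a valid $t := m x_0^{m-1} \neq 0$. Combined with the two auxiliary facts, $F_t$ then has exactly one double root and $m-2$ simple roots, proving $|\mathcal{C}_f \cap \ell_{a,b,t}| = m-1$. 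The only real obstacle is not the existence step but the uniqueness and multiplicity-two control on the double root; this is precisely what the hypotheses $p \nmid m(m-1)$ and $b \neq 0$ are designed to ensure.
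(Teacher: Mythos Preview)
Your proof is correct and follows essentially the same approach as the paper's: both reduce to showing that $F_t(x)=x^m-tx+ta-b$ can be made to have exactly one double root, parametrize candidate double roots via the auxiliary polynomial $(1-m)x^m+ma\,x^{m-1}-b$, and rule out triple roots and multiple double roots using $p\nmid m(m-1)$ together with $b\neq 0$. Your version is in fact slightly more streamlined, since you establish the ``at most one double root'' claim uniformly for every $t$ (handling $t=0$ inside the same argument), whereas the paper first restricts to $\bar t\notin\{0,m\}$ and then must argue separately that $\varphi_{a,b}$ has enough distinct roots to allow such a $\bar t$ to be chosen.
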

\proof
We have to prove the existence of $\overline t \in \overline{\mathbb{F}_q}$ for which $x^m-\overline t x+\overline t a-b$ has exactly $m-1$ distinct roots, which is equivalent to show that the polynomial $x^m-\overline t x+\overline t a-b$ has exactly one root with multiplicity $2$ and all the others with multiplicity $1$. 

Let $F_{a,b,t}(x)=x^{m}-t x+t a-b$. First, note that $F_{a,b,t}(x)$, $t \neq 0$, has roots  of multiplicity at most two. In fact, $(\partial F_{a,b,t}/\partial x)(x)=mx^{m-1}-t$ is squarefree ($p\nmid (m-1)$ by assumption) and therefore  $F_{a,b,t}(x)$ has no roots of multiplicity three. Take now a solution $(\overline x, \overline t)$ of the system $F_{a,b,\overline t}(x)=0$ and $(\partial F_{a,b,t}/\partial x)(x)=0$, so that $F_{a,b,\overline{t}}(x)$ has $\overline{x}$ as a double root. Clearly, we must have that $\overline{t}=m\overline{x}^{m-1}$ and therefore $\overline x$ is a root of $(1-m)x^{m}+am x^{m-1}-b$.

If $\overline{t}\notin \{0,m\}$, we prove now that $F_{a,b,\overline t}$ cannot have two roots $x_1\neq x_2$ of multiplicity two. In fact,
$$\left(\frac{\partial F_{a,b,\overline{t}}}{\partial x}\right)(x_1)=mx_1^{m-1}-\overline{t}=0=mx_2^{m-1}-\overline{t}=\left(\frac{\partial F_{a,b,\overline{t}}}{\partial x}\right)(x_2)$$
and 
 \begin{eqnarray*}
    F_{a,b,\overline{t}}(x_1)&=&x_1^m-\overline{t} x_1+\overline{t} a-b=
    (\overline{t}/m-\overline{t})x_1+\overline{t}a-b=0\\
    &=&(\overline{t}/m-\overline{t})x_2+\overline{t}a-b=x_2^m-\overline{t} x_2+\overline{t} a-b=F_{a,b,\overline{t}}(x_2),
    \end{eqnarray*}
   which yields $(\overline{t}/m-\overline{t})x_1=(\overline{t}/m-\overline{t})x_2$ and thus $x_1=x_2$, a contradiction since we assumed $\overline{t}\notin \{0,m\}$. 
 
Now we simply have to show that we can always choose $\overline{t}\notin \{0,m\}$. First, we observe that $\varphi_{a,b}=(1-m)x^m+amx^{m-1}-b$  has  $m$ distinct roots: $0$ is not a root and the unique possible repeated root is $a$, which is not a root since $b\neq a^m$ ($P$ does not belong to $\mathcal{C}_f$).  In addition, we observe that at most one  root $\overline{x}$ of $\varphi_{a,b}$ satisfies $\overline{x}^{m-1}=1$.  So that there is necessarily another root $\overline{x}$ of $\varphi_{a,b}$ for which $\overline{t}=m\overline{x}^{m-1}\notin\{ 0,m\}$. 

The claim follows.
\endproof

For any polynomial $F\in \mathbb F_q(t)[x]$ we can define the \emph{arithmetic} Galois group as
$\Gal(F\mid \mathbb F_q(t))$ and the \emph{geometric} Galois group as $\Gal(F\mid \overline{\mathbb F}_q(t))$. It is immediate to see that $\Gal(F\mid \overline{\mathbb F}_q(t))\cong \Gal(F\mid k(t))$, where $k$ is the field of constants of the splitting field $M$ of $F$ over $\mathbb F_q(t)$.

\begin{theorem}\label{Th:PrimitiveToy}
The Galois group of $F=x^m-b-t(x-a)$ over $\mathbb F_q(t)$ is primitive.
\end{theorem}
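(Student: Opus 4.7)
The plan is to translate primitivity into the indecomposability of the rational function $\varphi(x) := (x^m - b)/(x-a)$ via Lüroth's theorem, and then to rule out decompositions by a short ramification analysis at the fiber over $\infty$.

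First I would check that $F = x^m - b - t(x-a)$ is irreducible over $\mathbb F_q(t)$. Viewed as a polynomial in $t$ with coefficients in $\mathbb F_q[x]$, it has degree $1$ in $t$ and content $\gcd(x-a,\ x^m - b) = 1$ in $\mathbb F_q[x]$ (the hypothesis $P \notin \mathcal C_f$ forces $a^m \neq b$). Gauss's lemma then yields irreducibility of $F$ in $\mathbb F_q[t,x]$, hence in $\mathbb F_q(t)[x]$; therefore $G := \Gal(F \mid \mathbb F_q(t))$ acts transitively on the $m$ roots of $F$.

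Pick a root $\alpha$ and set $L := \mathbb F_q(t,\alpha)$. Since $t = \varphi(\alpha)$ we in fact have $L = \mathbb F_q(\alpha)$, a rational function field. A nontrivial block system for $G$ corresponds, via the Galois correspondence, to a proper intermediate field $\mathbb F_q(t) \subsetneq K \subsetneq L$, and by Lüroth's theorem any such $K$ has the form $\mathbb F_q(\beta)$ for some $\beta = g(\alpha)$, with $g$ a rational function. Writing $t = h(\beta)$ then produces a nontrivial decomposition $\varphi = h \circ g$, with $\deg g = d$, $\deg h = e$, $de = m$, and $1 < d, e < m$.

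To rule out such a decomposition I would analyze the pole locus of $\varphi$: since $a^m \neq b$, it consists of exactly two points, $x = a$ (simple pole) and $x = \infty$ (pole of order $m-1$). Writing $h^{-1}(\infty) = \{y_1, \ldots, y_s\}$ with $h$-pole orders $e_1, \ldots, e_s$ and letting $\{d_{i,j}\}_j$ denote the ramification indices of $g$ above $y_i$, the pole orders of $\varphi$ at the points of $\varphi^{-1}(\infty)$ are exactly the products $e_i d_{i,j}$, which as a multiset must equal $\{1, m-1\}$. Since $|\varphi^{-1}(\infty)| = 2$, either $s = 1$ (so $e$ divides both $1$ and $m-1$, forcing $e = 1$) or $s = 2$ with each $g^{-1}(y_i)$ a single totally ramified point of ramification index $d$ (so $d$ divides $1$, forcing $d = 1$). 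Both cases contradict $1 < d, e < m$, so $\varphi$ is indecomposable, no proper intermediate field exists, and $G$ is primitive. The only real content is the ramification bookkeeping, which is short because $|\varphi^{-1}(\infty)| = 2$ and the two pole orders are coprime; I do not expect a serious obstacle beyond this.
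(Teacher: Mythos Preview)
Your proof is correct and follows essentially the same approach as the paper: reduce primitivity to the indecomposability of $\varphi(x)=(x^m-b)/(x-a)$ via the Galois correspondence and L\"uroth's theorem, then rule out a nontrivial decomposition $\varphi=h\circ g$ by analyzing the fiber over $\infty$. Your ramification bookkeeping (multiplicativity of pole orders forcing $e\mid 1$ when $s=1$ or $d\mid 1$ when $s=2$) is somewhat more streamlined than the paper's explicit case analysis on the shape of the denominator of the outer map, but the underlying idea is the same.
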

\begin{proof}
Let $M$ be the splitting field of $F$ and $G$ be the Galois group of $F$ over $\mathbb{F}_q(t)$. Let $x$ be a root of $F$ and consider the extension $\mathbb{F}_q(x)/\mathbb F_q(t)$. Clearly, $t=(x^m-b)/(x-a)=f$ by definition. Let us assume by contradiction that $G$ is not primitive, then it is a standard fact that the stabilizer $S_x\leq G$ of $x$ is not maximal \cite[Theorem 8.2]{Helmut}. Let us consider the fixed field $L$ of $S_x$. By the Galois correspondence, we have the chain of extension $\mathbb{F}_q(x)/L/\mathbb{F}_q(f)$ with both $[\mathbb{F}_q(x):L],[L:\mathbb{F}_q(f)]$ greater than or equal to $2$. We know that every subfield of $\mathbb{F}_q(x)$ is rational by Luroth theorem. This forces $L=\mathbb{F}_q(s)$ for some $s=h/k$, with coprime polynomials $h,k\in \mathbb{F}_q[x]$. But now, also the field $\mathbb{F}_q(s)$ is rational and $\mathbb{F}_q(f)$ is a subfield, which forces $f=j(s)/g(s)$ for some coprime $j,g\in \mathbb F_q[Y]$. Since $[\mathbb{F}_q(x):L],[L:\mathbb{F}_q(f)]\geq 2$ it is immediate to observe that 
\begin{equation}\label{Eq:assumptions}
2\leq  \deg(h/k),\deg(j/g)\leq m-1.
\end{equation}

We are now ready to deduce the contradiction.

First, we  show that $g=(x-u)^d$ for some positive integer $d$ and $u\in \overline{ \mathbb {F}}_q$. Suppose that $g$ has at least two coprime factors,  that is $g=(x-z_0)^{i_1}(x-z_1)^{i_2}p(x)$.
We distinguish two cases:
\begin{itemize}
    \item $z_0$ (or $z_1$) is such that $h(x)-z_0k(x)=\overline{z}\in \overline{\mathbb{F}}_q$ (this means that $h/k-z_0$ has no roots in $\overline{\mathbb{F}}_q$). In this case   $h(x)-z_1k(x)$ (resp. $h(x)-z_0k(x)$) is a polynomial of degree $2\leq \deg(h)=\deg(k)\leq m-1$ and so it has either (at least) two distinct roots in $\overline{\mathbb{F}}_q$ or a repeated root of multiplicity larger than one. 
    \item Both $h(x)-z_0k(x)$ and $h(x)-z_1k(x)$ are not constant. By the argument above there are at least two distinct roots in $\overline{\mathbb{F}}_q$ of $(h(x)-z_0k(x))(h(x)-z_1k(x))$.
\end{itemize}

This shows that $g(h/k)=(h/k-z_0)^{i_1}(h/k-z_1)^{i_2}p(h/k)$ has either one root in $\overline{\mathbb{F}}_q$ of multiplicity larger than one or two distinct roots in $\overline{\mathbb{F}}_q$.

Now observe that $j(z_0)\neq 0$ and $j(z_1)\neq 0$ by the coprimality conditions of $j$ and $g$, which leads to either at least two poles  at finite of $f$ or a pole at finite with multiplicity at least two of $f$, which is impossible because $f=(x^m-b)/(x-a)$.

So we have that $g=(x-z_0)^d$ and therefore situation
\[\frac{j(h/k)}{(h/k-z_0)^d }=\frac{x^m-b}{x-a}.\]

Note that $h(x)/k(x)-z_0$ cannot have roots $x_0\in \overline{\mathbb{F}}_q\setminus \{a\}$: in this case $x_0$ would be a pole $f$ distinct from $a$ and $\infty$, a contradiction.

So the roots of $h(x)/k(x)-z_0$ are either $a$ or $\infty$. If $\infty$ is the unique root, then it must have order $\deg(h/k)$. In this case we conclude by observing that we have $d\deg(h/k)=m-1$ (as $\infty$ is a pole of order $m-1$ of $f$ and $j(z_0)\neq 0$). But then by the tower law we also have that $\deg(h/k)|m$, which forces $\deg(h/k)=1$, a contradiction to \eqref{Eq:assumptions}.

Therefore, we necessarily have $(h/k)(a)=z_0$ and $(h/k)(a^{\prime})\neq z_0$ for any $a^{\prime}\neq a$. In this case $a$ is a pole of order at least $d$ (remember $j((h/k)(a))=j(z_0)\neq 0$ as $j,g$ are coprime), which forces $d=1$. 
Moreover, $a$ must be a zero of order one of $h/k-z_0$ given the form of $f$.

Since now $\deg(s)=\deg(h/k)\geq 2$, the equation $(h/k)(x)=z_0$ must have another solution, which has to be $\infty$.
Moreover, since $\infty$ is a pole of order $m-1$ for $f$ and $j(z_0)\neq 0,\infty$ (in fact, $j$ is a polynomial coprime with $g$),  $\infty$  must have order $m-1$ as a zero of $(h/k)(x)-z_0$. So that $h/k$ is a map of degree $m$ (since it has zero divisor of degree $m$), again a contradiction to \eqref{Eq:assumptions}.
\end{proof}

\begin{theorem}\label{Th:MainToy}
Let $h$ be a positive integer, $p$ a prime number and $q=p^h$. 
Suppose that $p \nmid m(m-1)$. Let $a,b\in \mathbb F_q$ and $F_{a,b}(x)=x^m-tx+ta-b\in \mathbb F_q(t)$. For any $b\neq 0$, $\Gal(F_{a,b}(x)\mid \mathbb F_q(t))=\Gal(F_{a,b}(x)\mid \overline{\mathbb F}_q(t))=\mathcal{S}_m$.
\end{theorem}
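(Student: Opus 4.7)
The plan is to combine two ingredients: primitivity of the geometric Galois group $G^{geom}:=\Gal(F_{a,b}\mid\overline{\mathbb F}_q(t))$ (which follows from Theorem \ref{Th:PrimitiveToy}) and the existence of a transposition in it (produced via Proposition \ref{Th:DoublePointToy}). The classical theorem of Jordan---that a primitive subgroup of $\mathcal{S}_m$ containing a transposition equals $\mathcal{S}_m$---then yields the claim.

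Since $G^{geom}\leq G^{arith}\leq \mathcal{S}_m$, it suffices to prove $G^{geom}=\mathcal{S}_m$. First, I would observe that the proof of Theorem \ref{Th:PrimitiveToy} uses only L\"uroth's theorem and the structure of the poles of $f=(x^m-b)/(x-a)$ (a simple pole at $x=a$ and a pole of order $m-1$ at infinity), both of which are unchanged under base change from $\mathbb F_q$ to $\overline{\mathbb F}_q$. Hence the very same argument gives that $G^{geom}$ is primitive as a subgroup of $\mathcal{S}_m$.

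Second, to exhibit a transposition in $G^{geom}$, I would apply Proposition \ref{Th:DoublePointToy} (whose hypotheses $b\neq 0$, $m\geq 5$, $p\nmid m(m-1)$ are in force) to obtain $\overline t\in\overline{\mathbb F}_q$ such that $F_{a,b,\overline t}(x)=x^m-\overline t x+\overline t a-b$ has one double root and $m-2$ simple roots. Let $L=\overline{\mathbb F}_q(x)$, $F=\overline{\mathbb F}_q(t)$, $M$ the Galois closure of $L/F$, and pick a place $R$ of $M$ lying over the place $P=(t-\overline t)$ of $F$. The places of $L$ above $P$ correspond to the roots of $F_{a,b,\overline t}$ with ramification index equal to multiplicity, so there are $m-2$ places with $e=1$ and one place with $e=2$. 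Since $\overline{\mathbb F}_q$ is algebraically closed, every relative degree $f$ equals $1$, and hence $D(R\mid P)=I(R\mid P)$. By Lemma \ref{orbits}, the inertia group $I(R\mid P)\leq G^{geom}$ acts on the $m$ embeddings of $L$ into $M$ with $m-2$ fixed points and a single orbit of size $2$, forcing it to equal $\langle(1\,2)\rangle$: a transposition in $G^{geom}$.

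The main obstacle I expect is the verification that the primitivity proof of Theorem \ref{Th:PrimitiveToy} transfers verbatim to $\overline{\mathbb F}_q$: it does, but one should state this observation explicitly rather than leave it implicit, since in general primitivity of an arithmetic Galois group does not automatically descend to the geometric one. The ramification analysis producing the transposition is then routine---the key subtlety being that having exactly one double root (rather than several) rules out the possibility that the inertia group is a product of disjoint transpositions. Jordan's theorem finishes the job, yielding $G^{geom}=\mathcal{S}_m$ and therefore $G^{arith}=\mathcal{S}_m$ as well.
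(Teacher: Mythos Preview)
Your proposal is correct and follows essentially the same approach as the paper: produce a transposition in $G^{geom}$ from the specialization of Proposition~\ref{Th:DoublePointToy} via Lemma~\ref{orbits}, combine with primitivity from Theorem~\ref{Th:PrimitiveToy}, and conclude. The only cosmetic differences are that the paper invokes Theorem~\ref{thm:hemultsn} with $r=2$ (yielding $\mathcal A_m$ or $\mathcal S_m$, after which the odd transposition excludes $\mathcal A_m$) instead of Jordan's theorem directly, and that you make explicit what the paper leaves implicit---namely that the L\"uroth-based primitivity argument of Theorem~\ref{Th:PrimitiveToy} goes through unchanged over $\overline{\mathbb F}_q$.
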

\begin{proof}

Using Proposition \ref{Th:DoublePointToy} 
we obtain that there is a positive integer $d$ and a place $t_0\in \mathbb F_{q^d}$
such that $x^m-t_0x+t_0a-b$ has exactly one factor of multiplicity 
$2$ and all the others of multiplicity $1$. Let now $M$ be the splitting field of 
$F_{a,b}(x)$ over $\mathbb F_{q^d}(t)$. Let $R$ be a place of $M$ lying above $t_0$.
Now, using Lemma \ref{orbits} we obtain that the decomposition group $D(R\mid t_0)$ has a cycle of order exactly $2$ and fixes all the other elements of $H=\mathrm{Hom}_{\mathbb F_q(t)}(\mathbb F_q(t)[x]/F_{a,b}(x),M)$ ($H$ can be simply thought as the set of roots of $F_{a,b}(x)$ in $\overline{\mathbb F_q(t)})$. Now pick any element $g\in D(R\mid t_0)$ that acts non-trivially on $H$. This element has to be a transposition, which in turn forces $\Gal(F_{a,b}(x):\mathbb F_{q^{du}}(t))$ to contain a transposition for any $u\in \mathbb N$ and therefore in particular that $\Gal(F_{a,b}(x)\mid \overline{\mathbb F}_q(t))$ contains a transposition.

Using now Theorem \ref{Th:PrimitiveToy} we get that $\Gal(F_{a,b}(x)\mid \overline{\mathbb F}_q(t))$ is also primitive.

Finally, using Theorem  \ref{thm:hemultsn} with $r=2$ we conclude that both $\mathcal{S}_m=\Gal(F_{a,b}(x)\mid \overline{\mathbb F}_q(t))$ and $\Gal(F_{a,b}(x)\mid \mathbb F_q(t))=\mathcal S_m$.
\end{proof}

Let $q=p^e$, for some integer $e\in \mathbb N$, and let $C$ be the constant obtained in Theorem \ref{thm:existence_tot_split} with  $F=\mathbb F_q(t)$ and $L=F(z)$, where $z$ is a root of $F_{a,b}(x)\mid \overline{\mathbb F}_q(t)$. If $p>m$ then $C$ can be chosen according to Corollary \ref{cor:constant}, obtaining $C=9m^2(m!)^2$.
\begin{corollary}\label{Cor:ArcToy}
Suppose that $p\nmid m(m-1)$ and $q\geq C$. There exists a complete $m$-arc of size $q+2m-1$ in $PG(2,q)$.
\end{corollary}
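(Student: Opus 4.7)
The plan is to exhibit the arc as $\mathcal{A} = \mathcal{C}_f(\mathbb{F}_q) \cup \Gamma$, where $\Gamma$ is an explicit set of $2m-1$ additional points chosen to turn two specific lines into $m$-secants while avoiding any $(m+1)$-collinearity. Once $\Gamma$ is produced disjoint from $\mathcal{C}_f(\mathbb{F}_q)$, the size count $|\mathcal{A}| = q + 2m-1$ is immediate. Two things then need verification: $\mathcal{A}$ is an $m$-arc (no line contains $m+1$ of its points), and $\mathcal{A}$ is complete (every point outside $\mathcal{A}$ is $m$-covered).

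The bulk of the covering comes from the curve itself. For every $P = (a,b) \in \mathrm{AG}(2,q) \setminus \mathcal{C}_f$ with $b \neq 0$, Theorem \ref{Th:MainToy} gives $\Gal(F_{a,b}(x) \mid \mathbb{F}_q(t)) = \Gal(F_{a,b}(x) \mid \overline{\mathbb{F}}_q(t)) = \mathcal{S}_m$, so in particular the field of constants of the Galois closure is $\mathbb{F}_q$ and Theorem \ref{thm:existence_tot_split} applies; using the explicit $C = 9m^2(m!)^2$ from Corollary \ref{cor:constant}, the hypothesis $q \geq C$ produces a totally split place $t_0 \in \mathbb{F}_q$, which means that $F_{a,b}(t_0,x) = x^m - t_0 x + t_0 a - b$ is a product of $m$ distinct $\mathbb{F}_q$-linear factors. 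The line $\ell_{P,t_0}$ of slope $t_0$ through $P$ then meets $\mathcal{C}_f(\mathbb{F}_q)$ in $m$ distinct points and $m$-covers $P$. The external points not yet handled are precisely the affine $x$-axis points $(a,0)$ with $a \in \mathbb{F}_q^{*}$ (where Theorem \ref{Th:MainToy} is not available) and the points of $\ell_\infty$. The set $\Gamma$ is defined to take care of them: one natural choice is
\[
\Gamma = \{Y_\infty\} \cup \{(a_i,0) : 1 \leq i \leq m-1\} \cup \{[1{:}s_j{:}0] : 1 \leq j \leq m-1\},
\]
with $Y_\infty = [0{:}1{:}0]$, the $a_i \in \mathbb{F}_q^{*}$ pairwise distinct and the $s_j \in \mathbb{F}_q$ pairwise distinct. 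Then $y = 0$ contains $(0,0) \in \mathcal{C}_f(\mathbb{F}_q)$ and the $m-1$ points $(a_i,0)$, making it an $m$-secant that covers every $(a,0)$ with $a \in \mathbb{F}_q$ and the point $[1{:}0{:}0]$; similarly $\ell_\infty$ contains $Y_\infty$ together with the $[1{:}s_j{:}0]$, making it an $m$-secant covering every remaining point at infinity.

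The main obstacle is proving the $m$-arc property. The two lines $y = 0$ and $\ell_\infty$ are saturated $m$-secants by design, and any vertical line $x=c$ contains at most the three arc points $(c,c^m)$, $Y_\infty$, and possibly a single $(a_i,0)$ — harmless for $m \geq 4$. The genuinely sensitive situation is a slanted line $y = tx + s$ with $t \neq 0$ whose slope $t$ coincides with one of the added $s_j$: it already passes through the arc point $[1{:}s_j{:}0] \in \Gamma$, and if it also meets $\mathcal{C}_f(\mathbb{F}_q)$ in $m$ distinct $\mathbb{F}_q$-points then the arc acquires $m+1$ collinear points. A Chebotarev-type analysis applied to the one-parameter family $x^m - s_j x - c$ with parameter $c$ (analogous to the $(t,x)$-setup of Section 3) shows that this collision occurs precisely when the Galois group of that specialization is again $\mathcal{S}_m$; the slopes $s_j$ must therefore be selected in the exceptional locus where the $c$-parameter Galois group is a proper subgroup of $\mathcal{S}_m$, i.e., the slopes for which no line of slope $s_j$ is a full $m$-secant to the curve. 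A Galois-theoretic study of the cover $x \mapsto x^m - sx$ — using the same primitivity argument as in Theorem \ref{Th:PrimitiveToy} together with Theorem \ref{thm:hemultsn} — produces at least $m-1$ such exceptional slopes when $p \nmid m(m-1)$, and this completes the verification of the $m$-arc condition and hence of the existence of a complete $m$-arc of size $q + 2m - 1$.
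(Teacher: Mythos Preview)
Your covering argument for affine points $(a,b)$ with $b\neq 0$ is correct and is exactly what the paper does. The problem lies in your attempt to verify the $m$-arc property for the explicitly chosen set $\mathcal A=\mathcal C_f(\mathbb F_q)\cup\Gamma$.

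First, you never address slanted lines through the added points $(a_i,0)$. A line $y=t(x-a_i)$ meets $\mathcal C_f$ in the roots of $x^m-tx+ta_i$, and for $a_i\neq 0$ the same analysis as in Proposition~\ref{Th:DoublePointToy} (the hypothesis $b\neq 0$ there is used only to guarantee \emph{many} good tangents, not to forbid them) produces a specialisation with exactly one double root; combined with Theorem~\ref{Th:PrimitiveToy}, which has no restriction on $b$, the geometric Galois group over $t$ is again $\mathcal S_m$. Hence for $q$ large there \emph{do} exist $t_0\in\mathbb F_q$ making that line an $m$-secant to $\mathcal C_f$, and adjoining $(a_i,0)$ creates $m+1$ collinear points.

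Second, your resolution of the analogous issue for the points $[1{:}s_j{:}0]$ does not work as written. The tools you invoke (primitivity as in Theorem~\ref{Th:PrimitiveToy} together with Theorem~\ref{thm:hemultsn}) establish that the Galois group of $x^m-sx-c$ over $\overline{\mathbb F}_q(c)$ \emph{is} $\mathcal S_m$ for generic $s$; once the geometric group equals $\mathcal S_m$ so does the arithmetic one, and Chebotarev then \emph{guarantees} totally split $c\in\mathbb F_q$. These tools therefore show that most slopes are \emph{not} exceptional, the opposite of what you need, and you give no argument for a lower bound of $m-1$ on the number of exceptional slopes.

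The paper sidesteps both difficulties by not fixing $\Gamma$ in advance. After the curve has done its work, every uncovered point lies on $y=0$ or on $\ell_\infty$; one then extends greedily, adjoining any still-uncovered point of either line. An uncovered point lies on no current $m$-secant, so adjoining it can never create an $(m{+}1)$-secant; and since each of the two lines already contains one curve point, at most $m-1$ additions on each line are possible before it becomes an $m$-secant and all of its points are covered. This gives a complete $m$-arc of size at most $q+2m-1$ with no further Galois analysis required.
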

\begin{proof}
Set $F=\mathbb F_q(t)$ and $L=F(z)$.
By Theorem \ref{Th:MainToy} we have that the field of constants of the Galois closure of $L:F$ is trivial, as the geometric Galois group of $F_{a,b}(x)$ is equal to the arithmetic one. Using now Theorem \ref{thm:existence_tot_split} we have that there exists a specialization $t_0\in  \mathbb F_q$ such that $F_{a,b}(x)$ is totally split. This is equivalent to the fact that through any point $P=(a,b)\in \mathrm{PG}(2,q)$, $b\neq 0$, not belonging to $\mathcal{C}_f$ there exists an $\mathbb{F}_q$-rational line intersecting  $\mathcal{C}_f$ in $m$ distinct $\mathbb{F}_q$-rational points. This means that $\mathcal{C}_f(\mathbb{F}_q)$ $m$-covers all the affine points outside $y=0$. Therefore, adding to $\mathcal{C}_f(\mathbb{F}_q)$ at most $m-1$ points on $y=0$ and $m-1$ points at infinity, we can obtain  a complete $m$-arc of size at most $q+2m-1$.
\end{proof}

\begin{remark}
The reader should notice here how geometric properties (so over $\overline{F}_q$), such as the ones that involve ramification of a covering over $\overline{\mathbb{F}_q}$, given by Proposition \ref{Th:DoublePointToy}, transfer via  Proposition \ref{thm:existence_tot_split} and Lemma \ref{orbits} into a totally arithmetic statement, involving the existence of a totally split place over $\mathbb F_q$.
\end{remark}

\section{Construction of complete $m$-arcs from curves $y^2x^{n}=g(x)$}\label{Section:y2}
In this section we always consider $p$ an odd prime. Complete $m$-arcs obtained in the previous section have size larger than $q$. Let us denote by $A(K,m)$  the smallest size of a complete $m$-arc of $\mathrm{PG}(2,K)$. In order to construct examples of complete $m$-arcs of size smaller than $q$ we have to use curves of genus larger than zero. The easiest choice is given by curves of the type $y^2=\widetilde{g}(x)$, with $\widetilde{g}(x)\in \mathbb{F}_q(x)$. In particular, we focus on curves $\mathcal{C}_f : f(x,y)=0$, where $f(x,y)=y^2x^n-g(x)$, for some $n<m-2$ with $n+ m\equiv 0 \pmod 2$, and a  squarefree polynomial $g(x)$ of degree $m$. In this case, the genus $g(\mathcal{C}_f)$ equals $\lfloor m/2\rfloor$; see \cite[Corollary 3.7.4]{17}.

In what follows we consider a prime $r\in ]m/2,m-2]$ such that $p\nmid r(r+2)(m-r-2)$. Let us discuss the existence of such prime $r$. 
\begin{remark}\label{Remark:Bertrand}
First, observe that if $p>m$ the existence of such prime is obvious thanks to  Bertrand's postulate for all $m\geq 8$. Using a generalization of Bertrand's postulate due to Nagura \cite{Nagura}, we can assume that there exists a prime number between $n$ and $6n/5$ for any $n\geq 25$.
 This yields the existence of at least three primes  in the interval $[m/2,(1.2)^3m/2]\subset [m/2,m-2]$ for $m\geq 50$. Actually, a computer search shows that for any  $m\in \{15\} \cup \{19\ldots 49\}$ there exist three primes in the interval $[m/2,m-2]$. We show now that $p>m/2$ is a sufficient condition for all $m\in \mathbb N$ such that $m\geq 50$ or $m\in \{15\} \cup \{19\ldots 49\}$. To see this, observe that there are $2$ primes $p_1$ and $p_2$ in the interval $[m/2,m-2]$ that are distinct from $p$. Moreover, $m-r-2$ cannot be divisible by $p$ for any of such primes, as it is less than $m/2$. Finally, if $p_1,p_2$ both satisfy $p\mid p_i+2$ then they are both solutions of $x\equiv -2 \mod p$, but then they have to be congruent modulo $p$ which is a contradiction as $p>m/2$. A closer  analysis of the  remaining $m$'s  shows that $p\geq m/2$ would provide a sufficient condition for any $m\geq5$.
\end{remark} 

Let us denote by $\mathcal{C}_f$ the curve $x^{m-2-r}y^2=g(x)$, $g(0)\neq 0$, and $g(x)$ squarefree.

Let $P=(a,b) \in \mathrm{AG}(2,q) \setminus \mathcal{C}_f$.
Consider a line $y=t(x-a)+b$ through $P$ and denote by 
\begin{equation}\label{Eq:g}
H_{a,b}(t,x)=t^2(x-a)^2x^{m-2-r}+2bt(x-a)x^{m-2-r}+b^2x^{m-2-r}-g(x).
\end{equation}

We want to obtain a result similar to Theorem \ref{Th:MainToy} for the Galois group of $H_{a,b}(t,x)$.

\begin{proposition}\label{Prop:1}

For any $(a,b)\in (\mathbb{F}_q)^2$, the Galois  group  of $H_{a,b}(t,x)$ is either 
$\mathcal{S}_m$ or $\mathcal{A}_m$.

\end{proposition}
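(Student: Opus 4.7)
The plan is to apply Theorem \ref{thm:hemultsn} to the geometric Galois group $G=\Gal(H_{a,b}(t,x)\mid\overline{\mathbb F_q}(t))$ by verifying that $G$ is transitive, primitive, and contains an $r$-cycle. Because the arithmetic Galois group contains $G$ and sits inside $\mathcal S_m$, whose only intermediate subgroup above $\mathcal A_m$ is $\mathcal S_m$ itself, establishing $G\in\{\mathcal A_m,\mathcal S_m\}$ will immediately yield the same dichotomy for the arithmetic group. Theorem \ref{thm:hemultsn} applies with $k=m-r\geq 3$, since the section's standing assumption $p\nmid r(r+2)(m-r-2)$ forces $m-r-2\neq 0$, i.e.\ $r\leq m-3$.

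Transitivity is equivalent, via Gauss's lemma, to absolute irreducibility of $H_{a,b}(t,x)$. Viewed as a quadratic in $t$ its discriminant equals $4(x-a)^2x^{m-2-r}g(x)$, which is not a square in $\overline{\mathbb F_q}(x)$ because $g$ is squarefree of degree $m\geq 1$; hence no factorization of $H_{a,b}$ into two polynomials linear in $t$ exists. A non-constant factor $P(x)\in\overline{\mathbb F_q}[x]$ would have to divide all three $t$-coefficients of $H_{a,b}$, but $g(0)\neq 0$ rules out $x\mid P$ and the hypothesis $(a,b)\notin\mathcal C_f$, i.e.\ $b^2a^{m-2-r}\neq g(a)$, rules out $(x-a)\mid P$.

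For the $r$-cycle I would analyze the Newton polygon of $H_{a,b}$ at the place $t=\infty$. Setting $s=1/t$, the coefficients of $x^{m-r}$ and $x^m$ in $\tilde H(s,x)=((x-a)+bs)^2x^{m-2-r}-s^2g(x)$ have $s$-valuations $-2$ and $0$ respectively, uniformly in $(a,b)$; hence the rightmost segment of the lower convex hull always joins $(m-r,-2)$ to $(m,0)$ with slope $2/r$ and horizontal length $r$. Because $r>m/2\geq 3$ is an odd prime, this slope is already in lowest terms, so the segment supports exactly one Puiseux cycle of length $r$, i.e.\ a unique place $R_\infty$ of the function field of $\mathcal C_f$ above $t=\infty$ with $e(R_\infty\mid\infty)=r$. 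Since $p\nmid r$ the ramification is tame, and Lemma \ref{orbits} applied at a place of the Galois closure above $t=\infty$ produces an element $\pi\in G$ whose cycle type contains an $r$-cycle together with further cycles whose total length is $m-r<r$; each of those further cycles has length strictly less than the prime $r$, hence coprime to $r$, so a suitable power of $\pi$ is a pure $r$-cycle in $G$.

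Primitivity is then a classical consequence: were $G$ to preserve a non-trivial block system of block size $k$, one would have $k\mid m$ and $1<k\leq m/2<r$, but the $r$-cycle could neither fix each block setwise (its $r$-element support cannot fit in a block of size $k<r$) nor permute the $m/k$ blocks in a single cycle (which would force the prime $r$ to divide $m/k\leq m/2<r$). Theorem \ref{thm:hemultsn} then yields $G\in\{\mathcal A_m,\mathcal S_m\}$. I expect the most delicate step to be the robustness of the Newton polygon analysis across degenerate configurations of $(a,b)$ (such as $a=0$ or $g(a)=0$); however, since the argument relies only on the rightmost edge from $(m-r,-2)$ to $(m,0)$, and this edge is determined solely by the coefficients of $x^{m-r}$ and $x^m$ in $H_{a,b}$, the $r$-cycle is produced uniformly in $(a,b)$.
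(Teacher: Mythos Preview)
Your argument is correct and follows the same path as the paper's: both analyze the place $t=\infty$ (you via the Newton polygon, the paper via a direct valuation computation) to find a place of $L$ above it with ramification index $r$, use Lemma~\ref{orbits} to extract an $r$-cycle in the geometric Galois group, deduce primitivity from $r>m/2$, and conclude by Theorem~\ref{thm:hemultsn}. Your explicit irreducibility check (which the paper leaves implicit) and your verification that $k=m-r\geq 3$ are welcome additions; one minor slip is that the valuations $-2,0$ you quote are those of the $x^{m-r}$ and $x^m$ coefficients of $H_{a,b}$ under $v_\infty$, not of $\tilde H$ under the $s$-adic valuation, but the slope $2/r$ and the conclusion are unaffected.
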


\begin{proof}
Let $P_{\infty}$ be the pole of $t$ in $\mathbb{F}_q(t)$. For any place $Q_{\infty}$ lying over $P_{\infty}$ in $\mathbb{F}_q(t,x):\mathbb{F}_q(t)$, where $H_{a,b}(t,x)=0$, we have that
$$v_{Q_{\infty}}(t)=e(Q_{\infty}|P_{\infty})v_{P_{\infty}}(t)=-e(Q_{\infty}|P_{\infty}).$$ For at least one place $\overline{P}$ lying over $P_{\infty}$ one have $v_{\overline{P}}(x)=s<0$. We want to understand the ramification of the extension of places $P_\infty \subseteq \overline P$.
Using the equation $H_{a,b}(t,x)=0$ and the fact that
$$-2e(\overline P|P_{\infty})+(m-r)s<-e(\overline P|P_{\infty})+(m-r-1)s<(m-r-2)s$$ we obtain that
\begin{eqnarray*}
\min\{-2e(\overline P|P_{\infty})+(m-r)s,-e(\overline P|P_{\infty})+(m-r-1)s,(m-r-2)s\}&=&\\
v_{\overline{P}}\left(t^2(x-a)^2x^{m-2-r}+2bt(x-a)x^{m-2-r}+b^2x^{m-2-r}\right)&=&\\
v_{\overline{P}}\left(g(x)\right)&=&ms.
\end{eqnarray*}

Now,
$$-2e(\overline P|P_{\infty})+(m-r)s=ms,$$
which yields $-2e(\overline P|P_{\infty})=rs$. Since $r$ is an odd prime,  $s$ must be even. Also, $2e(\overline P|P_{\infty}) \leq 2m <4r$, so $|s|<4$ and therefore $s=-2$ and $e(\overline P|P_{\infty})=r$.

Let $M$ be the splitting field of $H_{a,b}(x,t)$ over $K=\overline{\mathbb F}_q(t)$, let $\{z_1,\dots,z_m\}\subset M$ be the set of roots of $H_{a,b}(x,t)$, and let $L=K(z_1)$.  Notice that in this set-up all relative degrees of extension of places are $1$ because we are working over the algebraic closure of $\mathbb F_q$.
Using Lemma \ref{orbits} for the extension $L:K$, we get that $D(R|P)\leq \Gal(M:K)$ has an orbit $\mathcal{O}$ on $\{z_1,\ldots,z_m\}$ of size $r>m/2$ (we are using here the standard identification of the set of roots of $H_{a,b}(x,t)$ with $\mathrm{Hom}_K(L,M)$, which is of course compatible with the $D(R|P)$ action). Consider an element $\phi\in D(R|P)$ acting non-trivially on $\mathcal{O}$. This means  that it acts on $\mathcal{O}$ as cycle of length $r$. Then $\phi^{(r-1)!}$ is an element of $G$ which acts on $\mathcal{O}$ as cycle of length $r$ and trivially outside. Since $r>m/2$ this also shows that $G$ is primitive. By Theorem \ref{thm:hemultsn}, $G$ is either $\mathcal{S}_m$ or $\mathcal{A}_m$.
\end{proof}

In order to show that $G$ is not $\mathcal{A}_n$, we prove that actually $G$ contains a transposition. In particular, we show the existence of  a specialization $t_0$ for which $H_{a,b}(t_0,x)$ has only simple roots apart from a  unique double one. 

To this end, we start with a technical lemma.
\begin{lemma}\label{Lemma}
Let $p$ be an odd prime number, $m$ be a positive integer, $r$ be another prime such that $m/2<r\leq m-2$, and suppose that $p\nmid r(r+2)(m-r-2)$. Let $g(x)$ be a  squarefree polynomial with leading coefficient $a_0$ such that $g(0)\neq 0$.  Consider  
$$h(x)=-(m-r-2)g(x)+xg^{\prime}(x)=(r+2)a_0x^m+\cdots.$$
Then the two curves $\mathcal{D}_1$ and $\mathcal{D}_2$ defined respectively by 
\begin{eqnarray*}(h(y))^2x^{m-r}g(x)&=&(h(x))^2y^{m-r}g(y),\\
2yh(x)g(y)&=&h(y)((y-x)h(x)+2xg(x)).
\end{eqnarray*}
do not share a component different from $x-y=0$.
\end{lemma}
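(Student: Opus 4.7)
The plan is to recast both defining equations as $\psi(x)=\psi(y)$ and $\rho(x)=\rho(y)$ for two rational functions $\psi,\rho\in\overline{\mathbb F_q}(u)$, and then to exploit L\"uroth's theorem together with a local analysis at $\infty$ to show that the morphism $\Phi=(\psi,\rho)\colon\mathbb P^1\to\mathbb P^1\times\mathbb P^1$ is birational onto its image; this forces the only positive-dimensional component of $\{(x,y):\Phi(x)=\Phi(y)\}$ to be the diagonal $x=y$.

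The first step is the rewriting. Setting $\tilde h(u):=ug'(u)-(m-r)g(u)=h(u)-2g(u)$, one has the identity $(y-x)h(x)+2xg(x)=yh(x)-x\tilde h(x)$, hence $\mathcal D_2$ is equivalent (after dividing by $h(x)h(y)$) to $y\tilde h(y)/h(y)=x\tilde h(x)/h(x)$, that is $\rho(x)=\rho(y)$ with $\rho(u):=u\tilde h(u)/h(u)$. Similarly $\mathcal D_1$ becomes $\psi(x)=\psi(y)$ for $\psi(u):=u^{m-r}g(u)/h(u)^2$. Using $g$ squarefree, $g(0)\neq 0$ and $p\nmid r(r+2)(m-r-2)$, I would then check that $h(0)\neq 0$, that the leading coefficients $(r+2)a_0$ and $ra_0$ of $h$ and $\tilde h$ are nonzero, and that $\gcd(h,g)=\gcd(h,\tilde h)=1$: at any $u_0$ with $h(u_0)=0$ the second follows from $\tilde h(u_0)=-2g(u_0)$, and $g(u_0)\neq 0$ because otherwise $u_0$ would be a common root of $g$ and $g'$. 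These give $\psi,\rho$ in lowest terms, $\deg\psi=2m$, $\deg\rho=m+1$, $v_\infty(\psi)=r$, and $v_\infty(\rho)=-1$.

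The third step is the L\"uroth argument. Writing $\overline{\mathbb F_q}(\psi,\rho)=\overline{\mathbb F_q}(\theta)$, the degree $d$ of $\theta$ divides both $\deg\psi=2m$ and $\deg\rho=m+1$, so $d\mid\gcd(2m,m+1)\leq 2$. If $d=1$, then $\Phi$ is birational and we are done, since the fibre product $\{\Phi(x)=\Phi(y)\}$ reduces to the diagonal plus at most finitely many isolated points. The hard case is $d=2$, which I would rule out as follows. Write $\psi=\psi_1\circ\theta$ and $\rho=\rho_1\circ\theta$, and set $p:=\theta(\infty)$. From $-1=v_\infty(\rho)=e(\theta,\infty)\,v_p(\rho_1)$ we get $e(\theta,\infty)=1$, so there is a distinct second preimage $p'\neq\infty$ of $p$ with $e(\theta,p')=1$. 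Then
\[
v_{p'}(\rho)=v_p(\rho_1)=-1\qquad\text{and}\qquad v_{p'}(\psi)=v_p(\psi_1)=v_\infty(\psi)=r.
\]
The first relation says $p'$ is a (simple) pole of $\rho$, hence necessarily a root of $h$; by Step 2, then $p'\neq 0$ and $g(p')\neq 0$, so $\psi(p')=p'^{m-r}g(p')/h(p')^2=\infty$. This contradicts $v_{p'}(\psi)=r>0$, ruling out $d=2$.

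The main obstacle is the ramification computation in the last step: one has to track valuations correctly under the factorisations $\psi=\psi_1\circ\theta$ and $\rho=\rho_1\circ\theta$, exploiting the oddness of $r$ (so that the value $-1$ at $\infty$ cannot be absorbed into larger ramification) together with the arithmetic constraint $\gcd(2m,m+1)\leq 2$. Once $d=1$ is established, $\Phi$ is birational onto its image, and the only positive-dimensional component of $\mathcal D_1\cap\mathcal D_2$ is $x-y=0$.
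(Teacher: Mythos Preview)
Your argument is correct and takes a genuinely different route from the paper's proof.

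\textbf{Comparison.} The paper argues by contradiction at the level of projective geometry: it observes that any common component $\mathcal D$ other than the diagonal would have to meet the line at infinity at $(1{:}0{:}0)$ or $(0{:}1{:}0)$, computes the tangent cones of $\mathcal D_1$ and $\mathcal D_2$ at these two points, checks (using $\gcd(g,h)=1$ and $h(0)\neq 0$) that the tangent cones share no line, and concludes via B\'ezout that $\mathcal D\cap\ell_\infty=\emptyset$, a contradiction. Your approach is more structural: the rewriting $\mathcal D_1=\{\psi(x)=\psi(y)\}$, $\mathcal D_2=\{\rho(x)=\rho(y)\}$ (which for $\mathcal D_2$ is in fact an exact polynomial identity, not merely ``after dividing by $h(x)h(y)$'') turns the question into whether the pair $(\psi,\rho)$ separates points of $\mathbb P^1$, i.e.\ whether $\overline{\mathbb F_q}(\psi,\rho)=\overline{\mathbb F_q}(u)$. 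L\"uroth plus the gcd bound $\gcd(2m,m+1)\le 2$ reduces this to excluding a single degree-$2$ intermediate field, which your valuation computation at $\infty$ and at the companion point $p'$ dispatches cleanly (the key being that $v_\infty(\rho)=-1$ forces $\theta$ to be unramified at $\infty$, so the companion $p'$ is a finite simple pole of $\rho$, hence a zero of $h$, hence a pole of $\psi$, contradicting $v_{p'}(\psi)=r>0$).

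The paper's proof is more elementary---only tangent-cone computations and B\'ezout---but somewhat ad hoc. Your proof is more conceptual and explains \emph{why} the result holds: the functions $\psi,\rho$ already generate the full function field. One small point you should make explicit: when passing from the polynomial equation of $\mathcal D_1$ to $\psi(x)=\psi(y)$ you implicitly use that no component of $\mathcal D_1$ (or $\mathcal D_2$) is a line $x=x_0$ or $y=y_0$ with $h(x_0)=0$ or $h(y_0)=0$; this follows immediately from $\gcd(g,h)=1$ and $h(0)\neq 0$, but deserves a sentence. Also, the remark about ``the oddness of $r$'' in your final paragraph is a slight red herring: what you actually use is $v_\infty(\rho)=-1$, which already forces $e(\theta,\infty)=1$ regardless of parity.
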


\begin{proof}
By our assumptions, $h(x)$ is of degree $m$, $h(0)\neq 0$, and $\gcd(g(x),h(x))=1$ since $g(x)$ is squarefree and $g(0)\neq 0$.

Note that  $\ell: x-y=0$ is  a component of both $\mathcal{D}_1$ and $\mathcal{D}_2$. Since $(1:1:0)$ is a simple point of $\mathcal{D}_2$,  points at infinity of $\mathcal{D}_2\setminus \ell$ are $P_{\infty}=(1:0:0)$ and $Q_{\infty}=(0:1:0)$ and they both belong to $\mathcal{D}_1$ too.  

Let $\mathcal{D}$ be a common component of $\mathcal{D}_1$ and $\mathcal{D}_2$ distinct from $\ell$. We use the following argument: we take a line $s$ and intersect it with both $\mathcal D_1$ and $\mathcal D_2$. If $\mathcal D_1$ and $\mathcal D_2$ share $\mathcal D$ as a common component, we must have that the tangent cone at  least one point $P\in s\cap \mathcal D_1\cap \mathcal D_2$ shares a line with the tangent cone of $\mathcal D_i$ for both $i=1,2$.
If we can show that the tangent cone of $\mathcal D_1$ at $P$ never shares a line with the tangent cone of $\mathcal D_2$ at $P$, for any $P \in s\cap \mathcal D_1\cap \mathcal D_2$,  we are done. In the argument below we consider as $s$ the line at infinity $\ell_{\infty}$.

The component $\mathcal D$ can intersect  $\ell_{\infty}$ at $P_{\infty}=(1:0:0)$ and $Q_{\infty}=(0:1:0)$.

We want to determine the tangent cone at $Q_{\infty}$ of both $\mathcal{D}_1$ and $\mathcal{D}_2$. 
Let $ \overline{h}(x,T)$ and $ \overline{g}(x,T)$ be the homogenizations of $h(x)$ and $g(x)$. Consider the projectivity $(x:y:T) \mapsto (x:T:y)$ which interchanges  $Q_{\infty}$ and $(0:0:1)$ and apply it to both $\mathcal{D}_1$ and $\mathcal{D}_2$. We obtain 
\begin{eqnarray*}
( \overline{h}(T,y))^2x^{m-r}\overline{g}(x,y)=( \overline{h}(x,y))^2T^{m-r}\overline{g}(T,y),\\
2T \overline{h}(x,y)\overline{g}(T,y)= \overline{h}(T,y)((T-x) \overline{h}(x,y)+2x\overline{g}(x,y)).
\end{eqnarray*}
Since $\overline{g}(1,0)=a_0$, $ \overline{h}(1,0)=(r+2)a_0$, the tangent cones at the origin  $(0:0:1)$ are 
\begin{eqnarray*}
 \overline{h}(1,0)^2x^{m-r}\overline{g}(x,y)-(r+2)^2a_0^2x^{m-r}\overline{g}(x,y)\not\equiv 0,\\
(2\overline g(1,0)- \overline{h}(1,0)) \overline{h}(x,y)+ra_0 \overline{h}(x,y)\not\equiv 0.
\end{eqnarray*}
The two tangent cones do not share any factor, since $p\nmid (m-r-2)$ and $g(0)\neq 0$ and thus $x\nmid \overline{h}(x,y)$, and $\gcd(g(x),h(x))=1$. This means that $\mathcal{D}$ does not pass  through $Q_{\infty}$. 

Since $(x:y:T)\mapsto (y,x,T)$ is an automorphism of both $\mathcal D_1$ and $\mathcal D_2$, we have that $\mathcal{D}$ cannot contain the point $P_{\infty}$ too. But by Bezout's theorem we have that $\emptyset \neq \left(\ell_{\infty} \cap \mathcal{D}\right)$. This is a contradiction, since
$$\emptyset \neq \left(\ell_{\infty} \cap \mathcal{D}\right) \subseteq  \left(\ell_{\infty} \cap \mathcal{D}_1  \cap \mathcal{D}_2\right)  =\emptyset.$$
\end{proof}

A key point in our argument is to prove the existence through points $(a,b)\notin \mathcal{C}_f$ of tangent lines to $\mathcal{C}_f$ intersecting the curve in $m-1$ distinct points. The following proposition provides an upper bound to the number of tangent lines to $\mathcal{C}_f$ intersecting $\mathcal{C}_f$ at two  distinct points of tangency or at a point with multiplicity larger than $2$. 

\begin{proposition}\label{Prop:TangentLines}
Let $p$ be an odd prime number, $q$ be a power of $p$, $m$ be a positive integer, $r$ be another prime such that $m/2<r\leq m-2$, and suppose that $p\nmid r(r+2)(m-r-2)$. Consider   a squarefree polynomial $g(x)\in \mathbb F_q[x]$ of degree $m$ such that $g(0)\neq 0$.
At most $7m^2+3m+2$ tangent lines to the curve $\mathcal{C}_f: x^{m-r-2}y^2=g(x)$ intersect   the curve in less than $m-1$ distinct points.  
\end{proposition}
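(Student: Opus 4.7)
The plan is to classify the tangent lines to $\mathcal{C}_f$ that meet the curve in fewer than $m-1$ distinct points into four types and bound each type separately.

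First I would check smoothness: the partial derivatives $F_x = (m-r-2)x^{m-r-3}y^2 - g'(x)$ and $F_y = 2yx^{m-r-2}$ of $F(x,y) = y^2x^{m-r-2} - g(x)$ cannot vanish simultaneously on $\mathcal{C}_f$, because $y = 0$ forces $g(x) = 0$ but $g'(x) \neq 0$ ($g$ squarefree), while $x = 0$ is excluded by $g(0) \neq 0$. The only singularity of the projective closure lies at $(0:1:0)$, whose tangent cone $X^{m-r-2}Z^{r} = 0$ consists of the two tangent lines $x = 0$ and the line at infinity. By Bezout, any tangent at a smooth affine point meets $\mathcal{C}_f$ in at most $m-1$ distinct points, and this bound is strict precisely when (a) the tangent point is a flex, (b) the line is a bitangent, (c) the line is the vertical tangent at some $(x_0,0)$ with $g(x_0)=0$, or (d) the line is one of the two tangents at the singular point $(0:1:0)$.

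For the bitangent count I would invoke Lemma \ref{Lemma}. At a smooth affine point $(X,Y)$ the tangent slope is $T(X,Y) = h(X)/(2Y X^{m-r-1})$, with $h$ as in the lemma. A bitangent corresponds to a pair of distinct points $(x,Y_1),(y,Y_2)$ with $T(x,Y_1) = T(y,Y_2)$ together with the collinearity of the two points along that common slope. Squaring the slope equality and substituting $Y_i^2 X_i^{m-r-2} = g(X_i)$ produces exactly the equation defining $\mathcal{D}_1$; inserting the slope relation into the collinearity condition and eliminating $Y_1 Y_2$ yields $\mathcal{D}_2$. By Lemma \ref{Lemma}, after factoring the common component $x - y$ out of both, the residuals $\mathcal{D}_1'$ (degree $4m - r - 1$) and $\mathcal{D}_2'$ (degree $2m$) share no component, so Bezout bounds the number of ordered bitangent pairs by $(4m-r-1)(2m)$, and \emph{a fortiori} the number of bitangent lines.

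To count flexes I would differentiate the relation $y^2 = \phi(x) := g(x)/x^{m-r-2}$ twice, obtaining $\phi'(x) = h(x)/x^{m-r-1}$ and $\phi''(x) = (xh'(x) - (m-r-1)h(x))/x^{m-r}$, so that $y''(x_0) = 0$ translates to the polynomial equation
$$2g(x_0)\bigl(x_0 h'(x_0) - (m-r-1)h(x_0)\bigr) = h(x_0)^2,$$
whose degree is exactly $2m$ (the leading coefficient $r(r+2)a_0^2$ is nonzero by $p \nmid r(r+2)$). Each of its at most $2m$ roots contributes at most two flex points through the sign of $Y_0$, so there are at most $4m$ flex tangent lines. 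The vertical tangents correspond bijectively to the $m$ roots of $g$, and the singular point contributes at most $2$ tangent lines. Summing,
$$(4m-r-1)(2m) + 4m + m + 2 \;=\; 8m^2 - 2mr + 3m + 2 \;\leq\; 7m^2 + 3m + 2,$$
the last inequality using $r > m/2$. The main obstacle is the bitangent estimate, which hinges crucially on Lemma \ref{Lemma} to rule out unwanted common components of $\mathcal{D}_1$ and $\mathcal{D}_2$; the remaining three counts are direct degree computations.
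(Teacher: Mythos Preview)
Your proposal is correct and follows essentially the same approach as the paper. Both arguments partition the exceptional tangent lines into the same four classes (bitangents, inflectional tangents, vertical tangents $x=x_0$ at zeros of $g$, and two special lines), bound the bitangents via Lemma~\ref{Lemma} and B\'ezout by $2m(4m-r-1)$, bound the flex tangents by showing the tangency-multiplicity-$\geq 3$ condition is a degree-$2m$ equation in $x_0$ with leading coefficient $r(r+2)a_0^2$ (your formulation through $\phi=g/x^{m-r-2}$ and $y''=0$ is just a cleaner repackaging of the paper's Taylor-expansion computation), and sum to the same expression $8m^2-2mr+3m+2\leq 7m^2+3m+2$. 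The only cosmetic difference is that your two special lines are $x=0$ and $\ell_\infty$ (the tangent cone at the singular point $(0{:}1{:}0)$), whereas the paper lists $y=0$ and $\ell_\infty$; your identification is actually the more accurate one, but the numerical contribution is the same.
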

\begin{remark}
The proof of this proposition is a bit technical, we first want to outline the idea and the structure. First, we consider the tangent lines which pass through a point of the curve of the form $(x_0,0)$: these are tangents with  multiplicity higher than $2$, so they do intersect the curve at less than $m-1$ distinct points and therefore we have to exclude them. Then, we consider the tangent lines that are passing through an affine point $(x_0,y_0)$ with $y_0\neq 0$ and show that the ones that are tangent to $C_f$ also in another point are only a few, which we count as well. Finally the remaining lines that intersect the curve in less than $m-1$ distinct points are the ones that are tangent to a point $(x_0,y_0)$ with multiplicity larger than or equal to $3$, which are also only a few.
Summing up the number of all these kinds of lines  we conclude (any other tangent line to $\mathcal C_f$ is going to be tangent only at one point and with multiplicity exactly $2$).
\end{remark}
\begin{proof}
First, note that there are no affine singular points in $\mathcal{C}$. There are no points of type $(0,y_0)$ and the tangent lines at $(x_0,0)$, where $g(x_0)=0$, are the lines $x=x_0$, which intersect $\mathcal{C}_f$ only at $(x_0,0)$ and $(0:1:0)$. 
Let us then consider the tangent line $\ell_P$  at a point $P=(x_0,y_0)\in \mathcal{C}$ such that $x_0y_0\neq 0$.
The parametric equation for the tangent line $\ell_P$ is 
\begin{equation}\label{Eq:TangentLine}
\left\{
\begin{array}{l}
x=x_0+2x_0^{m-r-1}y_0t \\
y=y_0+\Big(-(m-r-2)g(x_0)+x_0g^{\prime}(x_0)\Big) t\\
\end{array}
\right..
\end{equation}

Denote by $h(x)$ the polynomial $-(m-r-2)g(x)+xg^{\prime}(x)=(r+2)a_0x^m+\cdots$. By our assumption on $r+2$ and $g(x)$ we see that $\deg(h(x))=m$.

Consider now another tangent line $\ell_Q$, where $Q=(x_1,y_1)\in \mathcal{C}$. Then $\ell_Q \equiv \ell_P$ if and only 

\begin{eqnarray}
\frac{x_1-x_0}{2x_0^{m-r-1}y_0}=\frac{y_1-y_0}{h(x_0)} \label{Eq:1},\\
\frac{x_0^{m-r-1}y_0}{h(x_0)} =\frac{x_1^{m-r-1}y_1}{h(x_1)}\label{Eq:2}.
\end{eqnarray}

Equation \eqref{Eq:1} yields 
\begin{eqnarray}y_1&=&\frac{(x_1-x_0)h(x_0)}{2x_0^{m-r-1}y_0}+y_0=\frac{(x_1-x_0)h(x_0)}{2x_0g(x_0)}y_0+y_0\nonumber\\
&=&\frac{(x_1-x_0)h(x_0)+2x_0g(x_0)}{2x_0g(x_0)}y_0\label{Eq:3}.
\end{eqnarray}

From \eqref{Eq:2} we get 

\begin{equation}\label{Eq:4}\frac{y_1}{y_0}=\frac{h(x_1)x_0^{m-r-1}}{h(x_0)x_1^{m-r-1}}\end{equation}

and so, by squaring and using the equation of $\mathcal C_f$, we obtain that
\begin{equation}\label{Eq:5}(h(x_1))^2x_0^{m-r}g(x_0)=(h(x_0))^2x_1^{m-r}g(x_1).\end{equation}

Combining \eqref{Eq:3} and \eqref{Eq:4} we get 
\begin{equation}\label{Eq:6}2h(x_1)x_0^{m-r}g(x_0)=h(x_0)x_1^{m-r-1}((x_1-x_0)h(x_0)+2x_0g(x_0)) .\end{equation}

Thanks to \eqref{Eq:5}, $h(x_1)x_0^{m-r}g(x_0)=(h(x_0))^2x_1^{m-r}g(x_1)/h(x_1)$, and from \eqref{Eq:6} one gets 
\begin{equation}\label{Eq:7}2h(x_0)x_1g(x_1)=h(x_1)((x_1-x_0)h(x_0)+2x_0g(x_0)).
\end{equation}

Consider now the curves $\mathcal{D}_1$ and $\mathcal{D}_2$ defined by \eqref{Eq:5} and \eqref{Eq:7}. By Lemma \ref{Lemma}, they do not share a component different from $x_1-x_0=0$. By Bezout's Theorem, they intersect in at most $2m(4m-r-1)$ points with $x_1\neq x_0$ (counted with multiplicity). 

Since all affine points of $\mathcal{C}_f$ are simple, there passes exactly one tangent through each of them. Therefore there are at most $2m(4m-r-1)$ tangent lines intersecting $\mathcal{C}_f$ in at least two points of tangency. 

Now we deal with tangent lines $\ell_P$ at points $P=(x_0,y_0)\in \mathcal{C}_f$ intersecting $\mathcal{C}_f$ at $P$ with multiplicity larger than $2$. Recall that we are assuming $x_0y_0\neq 0$. 

Easy computations from the parametric equations \eqref{Eq:TangentLine}  show
\begin{eqnarray*}
g(x) &=& g(x_0)+2x_0^{m-r-1}y_0g^{\prime}(x_0)t+2x_0^{2m-2r-2}y_0^2 g^{\prime\prime}(x_0)t^2+\cdots,\\
y^2&=&y_0^2+2y_0h(x_0)t+h(x_0)^2t^2,\\
x^{m-r-2} &=& x_0^{m-r-2}+ 2(m-r-2) x_0^{2m-2r-4}y_0t+4\binom{m-r-2}{2} x_0^{3m-3r-6}y_0^2 t^2+\cdots.\\
\end{eqnarray*}
From $y^2x^{m-r-2}=g(x)$, it is easily seen that  $\ell_P$ intersects $\mathcal{C}$ in $P$ with multiplicity larger than $2$ if and only if either $x_0=0$ or 

$$h(x_0)^2+ 4(m-r-2) x_0^{m-r-2}y_0^2h(x_0)+4\binom{m-r-2}{2} x_0^{2m-2r-4}y_0^4=2x_0^{m-r}y_0^2 g^{\prime\prime}(x_0),$$

that is 
$$h(x_0)^2+ 4(m-r-2) g(x_0)h(x_0)+4\binom{m-r-2}{2} (g(x_0))^2=2x_0^{2}g(x_0) g^{\prime\prime}(x_0).$$

The above equation can be written as 
$$-r(r+2)a_0^2x_0^{2m}+\cdots=0,$$ 
and since $p\nmid r(r+2)$, it is satisfied by at most  $2m$ distinct  values $\eta_1,\ldots,\eta_{2m}$.  The corresponding points in $\mathcal{C}$ have tangent lines intersecting $\mathcal{C}$ with multiplicity at least $3$. Each $\eta_i$ gives rise to at most two points $\left(\eta_i,\pm \frac{\sqrt{g(\eta_i)}}{\eta_i^{m-r-2}}\right)$. This means that there are at most $4m$ tangent lines intersecting $\mathcal{C}_f$ with multiplicity larger than $2$. 

Summing up tangent lines $\ell_{P}$ intersecting $\mathcal{C}_f$ in less than $m-1$ distinct points are at most
$$ \underbrace{2m(4m-r-1)}_{\mathrm{two tangency points}}+\underbrace{m}_{\tiny {\begin{array}{c} x=x_0\\ g(x_0)=0\end{array}}}+\underbrace{4m}_{\tiny \begin{array}{c}\mathrm{multiplicity at } P\\ \mathrm{larger than 2}\end{array}}+\underbrace{2}_{\tiny \begin{array}{c} y=0,\\ \ell_{\infty} \end{array}}\leq 7m^2+3m +2$$
tangent lines of $\mathcal{C}_f$ intersecting the curve in less than $m-1$ points.
\end{proof}

Set 
\begin{equation}\label{Eq:Lambda}
\Lambda=\{\ell_P \ : \ \ell_P \text{ tangent at } P, |\ell_P\cap \mathcal{C}_f|<m-1\}\end{equation} 
and 
\begin{equation}\label{Eq:Gamma}
\Gamma = \bigcup _{\ell_P \in \Lambda}\{ Q \in \ell_P\}.
\end{equation}  

\begin{theorem}\label{Th:GaloisGroup}
Let $p$ be an odd prime number, $q$ be a power of $p$, $m$ be a positive integer, $r$ be another prime such that $m/2<r\leq m-2$, and suppose that $p\nmid r(r+2)(m-r-2)$. Consider  a  squarefree polynomial $g(x)\in  \mathbb F_q[x]$ of degree $m$ such that $g(0)\neq 0$. Let $L$ be the function field 
$\overline {\mathbb F_q}(x)[y]/(x^{m-r-2}y^2-g(x))$. Let $(a,b) \in \mathrm{AG}(2,q)\setminus (\mathcal{C}_f\cup \Gamma)$,  $F_{a,b}=\overline {\mathbb F_q}((y-b)/(x-a))$, and  $G_{a,b}$ be the Galois  group  of the Galois closure $M_{a,b}$ of the function field extension $L:F_{a,b}$. Then  $G_{a,b}\cong \mathcal{S}_m$. 
\end{theorem}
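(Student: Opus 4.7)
By Proposition \ref{Prop:1}, $G_{a,b}$ is either $\mathcal{S}_m$ or $\mathcal{A}_m$, so it suffices to exhibit a transposition in $G_{a,b}$. My goal is to locate a finite specialization $t=t_0\in\overline{\mathbb{F}_q}$ at which $H_{a,b}(t_0,x)$ has exactly one root of multiplicity $2$ and $m-2$ further pairwise distinct simple roots. Granting such a $t_0$, Lemma \ref{orbits} --- applied over the algebraically closed base, so that all residue degrees equal $1$ --- gives that the decomposition group $D(R\mid t_0)\le G_{a,b}$ acts on the $m$ roots of $H_{a,b}$ with orbit sizes equal to the ramification indices, namely $(2,1,\dots,1)$. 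Since $p$ is odd, the ramification index $2$ is coprime to $p$ so the ramification is tame, and therefore $D(R\mid t_0)=I(R\mid t_0)$ is cyclic; its generator must act as a transposition on the $m$ roots. This transposition lies in $G_{a,b}$, ruling out $\mathcal{A}_m$.

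I produce $t_0$ geometrically. Lines through $P=(a,b)$ of slope $t$ parametrize the cover, and the sought factorization of $H_{a,b}(t_0,x)$ is equivalent to the line $\ell_{t_0}$ being tangent to $\mathcal{C}_f$ at exactly one point with intersection multiplicity $2$ and meeting $\mathcal{C}_f$ transversally at $m-2$ further distinct affine points, that is, $|\ell_{t_0}\cap\mathcal{C}_f|=m-1$. (The tangent point is automatically a smooth point of $\mathcal{C}_f$, since $\mathcal{C}_f$ has no affine singular points.) Proposition \ref{Prop:TangentLines} shows that the set $\Lambda$ of tangent lines $\ell$ to $\mathcal{C}_f$ with $|\ell\cap\mathcal{C}_f|<m-1$ is finite, and the hypothesis $(a,b)\notin\Gamma=\bigcup_{\ell\in\Lambda}\ell$ is exactly the statement that no such bad tangent passes through $P$. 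Moreover, the vertical tangents of $\mathcal{C}_f$ --- the lines $x=x_0$ with $g(x_0)=0$ --- lie in $\Lambda$, so $P\notin\Gamma$ also excludes any vertical tangent through $P$. Consequently every tangent to $\mathcal{C}_f$ through $P$ has finite slope and is automatically good.

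It remains to exhibit at least one tangent to $\mathcal{C}_f$ passing through $P$. The discriminant $\Delta(t)=\mathrm{Disc}_x(H_{a,b}(t,x))\in\overline{\mathbb{F}_q}[t]$ is nonzero --- separability of $H_{a,b}$ over $\overline{\mathbb{F}_q}(t)$ follows from the transitive action of $G_{a,b}$ guaranteed by Proposition \ref{Prop:1} --- and non-constant, since the degree-$m$ projection of $\mathcal{C}_f$ to $\mathbb{P}^1$ from $P$ must ramify by the Hurwitz formula, with ramification divisor of degree at least $2m-2>0$ for $m\ge 2$. Any root $t_0\in\overline{\mathbb{F}_q}$ of $\Delta$ therefore gives a good tangent line through $P$, and the first paragraph supplies the required transposition. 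The main technical point is the bookkeeping, through Lemma \ref{orbits} and tameness of the ramification, that translates the intersection pattern $(2,1,\dots,1)$ into the cycle type of a generator of the decomposition group.
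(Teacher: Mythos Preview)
Your proof is correct and follows essentially the same route as the paper: reduce via Proposition~\ref{Prop:1} to exhibiting a transposition, use the hypothesis $(a,b)\notin\Gamma$ together with Proposition~\ref{Prop:TangentLines} to guarantee that any tangent through $P$ yields the intersection pattern $(2,1,\dots,1)$, invoke Hurwitz to ensure at least one ramified place exists, and apply Lemma~\ref{orbits} to extract the transposition from the decomposition group. Your write-up is in fact slightly more careful than the paper's in spelling out why a ramified place exists, why vertical tangents are already in $\Lambda$, and why (via tameness and the algebraically closed constant field) the inertia group is cyclic of order $2$.
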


\begin{proof}
By simply using the equation for the sheaf of lines through $(a,b)$, i.e. $y=t(x-a)-b$, one obtains that $M$ is the splitting field of $H_{a,b}(x,t)$ from \eqref{Eq:g}. We already know that the Galois group of $H_{a,b}(x,t)$ is either $\mathcal A_n$ or $\mathcal S_n$ by Proposition \ref{Prop:1}.
If we can prove that $G_{a,b}$ has a transposition we are done.
Since the degree of the extension $L:F_{a,b}$ is larger than $1$,  by Hurwitz formula then there is a ramified place $P$ of $F_{a,b}$. The point $(a,b)$ was chosen in such a way that every line through $(a,b)$ that is tangent to $\mathcal C_f$ 
has multiplicity exactly $2$ at the point of tangency, and every other multiplicity is one. So 
Lemma \ref{orbits} yields that any decomposition group 
$D(R|P)$ has an orbit of lenght $2$ and  $m-2$ fixed points when acting 
on $\Hom_{F_{a,b}}(L:M_{a,b})$ 
(which can simply be identified with the roots of $H_{a,b}(x,t)$). Now, any element of $D(R|P)\leq G_{a,b}$ that acts non trivially has to be a transposition, which concludes the proof, showing that $G_{a,b}$ is not alternating.
\end{proof}

\begin{remark}
The number $\mathcal{C}(\mathbb{F}_q)$ of degree one places of a curve $\mathcal{C}$ can be slightly different from the number $R_q(\mathcal{C})$ of points of $\mathcal{C}$ in $\mathrm{PG}(2,q)$. In our case, since the unique singular points of the curves $y^2x^{m-r-2}=g(x)$ is at infinity and there are at most $m-2$ places of degree one centered at it, the difference $|\mathcal{C}(\mathbb{F}_q)-R_q(\mathcal{C})|$ is at most $m$. For the construction of complete $m$-arcs we need to estimate $|R_q(\mathcal{C})|$ instead of $|\mathcal{C}(\mathbb{F}_q)|$, although these two numbers are close. 
\end{remark}

We are now in a position to prove our main result.
Set $C=C(L:F)$ to be the constant obtained in Theorem \ref{thm:existence_tot_split} with $L=\mathbb F_q(x,y)$ (with $x^{m-r-2}y^2=g(x)$).

\begin{theorem}\label{Th:main}
Let $m\geq 5$ be a positive integer, $p$ be an odd prime number, $q$ be a power of $p$, $r$ be another prime such that $m/2<r\leq m-2$. Suppose that $p\nmid r(r+2)(m-r-2)$.
Consider   a squarefree polynomial $g(x)\in \mathbb F_q[x]$ of degree $m$ such that $g(0)\neq 0$.
Let $R_q(\mathcal C_f)$ be the set of points of the curve
$\mathcal C_f$ defined by $x^{m-r-2}y^2=g(x)$ in $\mathrm{PG}(2,q)$.
Then there exist an explicit constant $C$ (depending only on $m$) and a positive integer $k\leq (7m^2+3m+2)m$ such that if $q>C$ then 
$$A(\mathbb{F}_q,m)\leq |R_q(\mathcal C_f)|+k.$$
\end{theorem}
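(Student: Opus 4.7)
The plan is to build the complete $m$-arc as $\mathcal A = R_q(\mathcal C_f)\cup \mathcal B$, where $|\mathcal B|\leq (7m^2+3m+2)\,m$ plays the role of $k$. The first and main step is to show that $R_q(\mathcal C_f)$ alone already $m$-covers every affine point lying outside the small exceptional set $\mathcal C_f\cup\Gamma$. For such a point $P=(a,b)$, Theorem~\ref{Th:GaloisGroup} gives $G_{a,b}\cong \mathcal S_m$, so the arithmetic and geometric Galois groups of $H_{a,b}(t,x)$ coincide and the Galois closure of $L:F_{a,b}$ has $\mathbb F_q$ as its field of constants. Applying Theorem~\ref{thm:existence_tot_split}, with the explicit $C$ furnished by Corollary~\ref{cor:constant}, we obtain (as soon as $q>C$) a degree-one totally split place of $L:F_{a,b}$, i.e.\ a slope $t_0\in\mathbb F_q$ at which $H_{a,b}(t_0,x)$ factors into $m$ pairwise distinct linear factors over $\mathbb F_q$. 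Geometrically, the line through $P$ of slope $t_0$ is an $\mathbb F_q$-rational $m$-secant of $\mathcal C_f$, hence $P$ is $m$-covered by $R_q(\mathcal C_f)$.

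Next, I would patch the remaining exceptional points, which lie in $(\Gamma\cap\mathrm{AG}(2,q))\cup\ell_\infty$. Proposition~\ref{Prop:TangentLines} bounds $|\Lambda|\leq 7m^2+3m+2$. For each $\ell\in\Lambda\cup\{\ell_\infty\}$, adjoin up to $m$ fresh $\mathbb F_q$-rational points of $\ell\setminus\mathcal C_f$, chosen so that the enlarged set meets $\ell$ in exactly $m$ points; the line $\ell$ is then an $m$-secant of $\mathcal A$ and thereby $m$-covers every one of its own points. In particular this takes care of all points of $\Gamma$ and, via $\ell_\infty$, of every point at infinity. Summing over $\Lambda\cup\{\ell_\infty\}$ gives $|\mathcal B|\leq (7m^2+3m+2)\,m$, absorbing the single extra contribution of $\ell_\infty$ into the bound and yielding the claimed estimate for $k$.

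What remains, and what I expect to be the main obstacle, is to verify that $\mathcal A$ is still an $m$-arc, i.e.\ that no line carries more than $m$ of its points. B\'ezout takes care of $R_q(\mathcal C_f)$ because $\deg\mathcal C_f=m$, and by construction every $\ell\in\Lambda\cup\{\ell_\infty\}$ ends up with exactly $m$ arc points; the delicate issue is that an adjoined point $Q$ on some $\ell$ might accidentally lie on a line $\ell'$ (transverse to $\Lambda\cup\{\ell_\infty\}$) already carrying $m$ points of the current sub-arc, producing an $(m+1)$-secant. I would resolve this by adding the points of $\mathcal B$ one at a time, always selecting the next $Q$ on the relevant $\ell$ to be an $\mathbb F_q$-rational point that is \emph{not} $m$-covered by the arc constructed so far: such a $Q$ lies on no current $m$-secant by definition, so its adjunction cannot raise any line's arc content above $m$. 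The existence of such a $Q$ on each $\ell$ reduces to ruling out that every $\mathbb F_q$-point of $\ell$ is $m$-covered by the current arc, which follows from a counting estimate that can be absorbed into the constant $C$. Once this is done, $\mathcal A$ is a complete $m$-arc with $\#\mathcal A\leq |R_q(\mathcal C_f)|+k$ and $k\leq (7m^2+3m+2)\,m$, as claimed.
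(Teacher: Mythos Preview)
Your proposal is correct and follows the paper's argument closely: cover affine points outside $\Gamma$ via Theorem~\ref{Th:GaloisGroup} and Theorem~\ref{thm:existence_tot_split}, then patch the lines of $\Lambda$ one by one by adjoining at most $m$ points each. One minor correction: Corollary~\ref{cor:constant} assumes $p>m$, which is not hypothesised here, so invoke only Theorem~\ref{thm:existence_tot_split} for the constant $C$ (as the paper does); otherwise you are if anything more careful than the paper, which simply asserts that the sets $S_i$ on each $\ell_i$ can be chosen so that no $(m{+}1)$-secant arises, without spelling out the greedy one-at-a-time selection you propose.
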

\begin{proof}
Let $\Lambda$ be as in \eqref{Eq:Lambda}. Enumerate the lines in $\Lambda$ by $\ell_1,\dots, \ell_b$, with $b\leq 7m^2+3m+2$ and consider the following process.
Start with the set $X_0=R_q(\mathcal C_f)$, for any $i=1,2,\dots b$ construct $X_i$ by taking the union of $X_{i-1}$ and a set $S_i$ of points of $\ell_i$ selected in such a way that there are not $m+1$ points aligned in $X_i=X_{i-1}\cup S_i$, but every point in $\ell_i$ is aligned with at least $m$-points in $X_i$.
Simply because all points of $\ell_i$ are aligned, it is immediate to see that $|S_i|\leq m$.
Set $Y=X_b$ and observe that $|Y|\leq |R_q(\mathcal C_f)|+k$, where $k\leq (7m^2+3m+2)m$ because of the previous construction and Proposition \ref{Prop:TangentLines}. We claim that $Y$ is an $m$-arc as long as $q$ is large enough.

Let $(a,b) \in AG(2,q)$. If $(a,b)$ lies in  $\ell_i$ for some $i$, then there is nothing to do: by construction it is aligned with other $m$ points of $Y$ but not with other $m+1$.
If $(a,b)$ does not lie on any of the $\ell_i$'s, then we consider the function field extension $L:F_{a,b}$, where $L=\overline{\mathbb F}_q(x,y)$ (and $x^{m-r-2}y^2=g(x)$) and $F_{a,b}=\overline{\mathbb F_q}((y-b)/(x-a))$. Use now Theorem \ref{Th:GaloisGroup} to deduce that the Galois closure $M$ of $L:F_{a,b}$ has Galois group $G=\mathcal S_n$. Let now $M'$ be the Galois closure of the extension $L':F_{a,b}'$, where $L'=\mathbb F_q(x,y)$ and $F_{a,b}'=\mathbb F_q((y-b)/(x-a))$. By standard Galois Theory, the geometric Galois group $G=\Gal(M:F_{a,b})$ naturally embeds in the arithmetic Galois group $A=\Gal(M':F_{a,b}')$ because $M$ and $M'$ are splitting field of $H_{a,b}(x,t)$ (recall its definition in \eqref{Eq:g}) respectively over $\overline{\mathbb F_q}(t)$ and $\mathbb F_q(t)$.
This forces immediately $G\cong A\cong \mathcal S_n$, so that the field of constants of $M'$  has to be $\mathbb F_q$.
Now apply Theorem \ref{thm:existence_tot_split} to $L':F_{a,b}'$ obtaining the existence of a totally split place as long as $q>C$, and in turn a specialization $t_0$ such that $H_{a,b}(x,t_0)$  is totally split over $\mathbb F_q$. By the definition of  $H_{a,b}(x,t_0)$ this is equivalent to give a slope $t_0$ of a line through $(a,b)$ that intersects the curve $\mathcal C_f$ in $m$ distinct points.
The result follows.
\end{proof}

Whenever the characteristic is larger than the degree of the curve, we get an unconditional result, also with a nice explicit constant.
\begin{theorem}\label{thm:chargreaterm}
Let $m$ be an integer such that $m\geq 8$.
Let $p>m$ be an odd prime number and $q$ be a power of $p$,  $g(x)\in \mathbb F_q[x]$ be a squarefree polynomial of degree $m$ such that $g(0)\neq 0$. Denote by  $R_q(\mathcal C_f)$ be the set of  points of the curve
$\mathcal C_f$ defined by $x^{m-r-2}y^2=g(x)$ in $\mathrm{PG}(2,q)$, for $r$ a prime chosen in the interval $]m/2,m-2]$.
Then there exists a positive integer $k\leq (7m^2+3m+2)m$ such that if $q>9(1+ m/2+m)^2(m!)^2$ then  $$A(\mathbb{F}_q,m)\leq |R_q(\mathcal C_f)|+k.$$
\end{theorem}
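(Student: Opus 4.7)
The plan is to deduce this theorem directly from Theorem \ref{Th:main} by turning its implicit constant into the explicit expression stated here via Corollary \ref{cor:constant}.

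First I would verify that the hypotheses of Theorem \ref{Th:main} hold for a suitable prime $r$ in the interval $(m/2, m-2]$. Since $p > m \geq 8$ implies $p > m/2$, Remark \ref{Remark:Bertrand} (whose final sentence extends the conclusion to all $m \geq 5$ under the condition $p \geq m/2$) guarantees the existence of a prime $r$ in that interval such that $p \nmid r(r+2)(m-r-2)$. Fix such an $r$ and let $\mathcal C_f : y^2 x^{m-r-2} = g(x)$. All characteristic hypotheses of Theorem \ref{Th:main} are then in force, and it gives
$$A(\mathbb F_q, m) \leq |R_q(\mathcal C_f)| + k$$
for some $k \leq (7m^2+3m+2)m$, provided $q > C$, where $C$ is the constant from Theorem \ref{thm:existence_tot_split} applied to the extension $L : F_{a,b}$ with $L = \mathbb F_q(x,y)$ (subject to $x^{m-r-2}y^2 = g(x)$) and $F_{a,b} = \mathbb F_q((y-b)/(x-a))$.

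Next I would make $C$ explicit. The extension $L : F_{a,b}$ has degree $n = m$: the polynomial $H_{a,b}(t,x)$ of \eqref{Eq:g} has degree $m$ in $x$, and for $(a,b)$ outside $\mathcal C_f \cup \Gamma$ it is the minimal polynomial (a generic line through $(a,b)$ meets $\mathcal C_f$ in $m$ distinct points). Since $p > m = n$, Corollary \ref{cor:constant} then yields $C = 9(g_F + g_L + n)^2 (n!)^2$. Here $g_F = 0$ because $F_{a,b}$ is rational. For $g_L$, the substitution $u = y x^{(m-r-2)/2}$ when $m-r-2$ is even (or $u = y x^{(m-r-1)/2}$ when $m-r-2$ is odd, which yields $u^2 = x g(x)$) birationally transforms $\mathcal C_f$ into a hyperelliptic curve $u^2 = \tilde g(x)$ with $\tilde g$ squarefree of degree $m$ or $m+1$; hence $g_L = \lfloor m/2 \rfloor \leq 1 + m/2$. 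Plugging these bounds into the formula gives $C \leq 9(1 + m/2 + m)^2 (m!)^2$, which is exactly the threshold in the statement.

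I do not expect any substantial obstacle. The only small subtleties are to confirm that the prime $r$ really exists for every $m \geq 8$ (covered by the last line of Remark \ref{Remark:Bertrand}) and to justify the hyperelliptic genus bound for both parities of $m-r-2$; both steps are routine, and with them in place the conclusion of Theorem \ref{Th:main} applies to every $q$ exceeding the explicit constant claimed here.
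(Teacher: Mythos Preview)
Your proposal is correct and follows essentially the same route as the paper: invoke Remark~\ref{Remark:Bertrand} to secure the prime $r$, apply Theorem~\ref{Th:main}, and make the constant explicit via Corollary~\ref{cor:constant} using $g_F=0$, $n=m$, and $g_L=\lfloor m/2\rfloor$. The only cosmetic difference is that the paper computes $g_L$ directly from the Kummer--extension genus formula \cite[Corollary~3.7.4]{17}, whereas you reach the same value by passing birationally to a standard hyperelliptic model; both arguments are routine and yield the stated bound.
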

\begin{proof}
Using \ref{Remark:Bertrand} we can immediately drop the hypothesis $p\nmid r(r+2)(m-r-2)$. Using now Corollary \ref{cor:constant} we get that $q>9(g_L+m)^2(m!)^2$, as the $F_{a,b}$ in the proof of Theorem \ref{Th:main} have always genus $0$. To obtain $g_L$ we now use \cite[Corollary 3.7.4]{17} and 
\begin{eqnarray*}
g_L&=&1+2\cdot(-1)+\frac{1}{2}\left(\sum_{P \in \mathbb{P}_{\overline{\mathbb{F}}_q(x)}}2-\gcd(2,v_P(g(x)/x^{m-r-2}))\right)\\
&=&-1+\frac{1}{2}\Big(m+1+(m-r \!\!\!\!\mod {2})\Big)=\left\lfloor \frac{m}{2}\right\rfloor.
\end{eqnarray*}
\end{proof}

In the small characteristic regime, an analogous result holds (here we do not compute the constants explicitly as they do depend on complicated error terms)
\begin{theorem}\label{cor:size_of_arc}
Let $p$ be an odd prime number greater than or equal to $5$, $q$ be a power of $p$,  $r$ be a prime number, $m$ be a positive integer,  $g(x)\in \mathbb F_q[x]$ be a squarefree polynomial of degree $m$ such that $g(0)\neq 0$.
 Let $R_q(\mathcal{C})$ be the set of  points of the curve
$\mathcal C$ defined by $x^{m-r-2}y^2=g(x)$ in $\mathrm{PG}(2,q)$. Then $r$ can be chosen in such a way that there is an absolute constant $C$ such that for every  $m>C$, there exist an explicit constant $D$ (depending only on $m$) and a positive integer $k\leq (7m^2+3m+2)m$ such that if $q>D$ then 
$$A(\mathbb{F}_q,m)\leq |R_q(\mathcal C_f)|+k.$$
\end{theorem}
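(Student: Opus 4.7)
The strategy is to mimic the proof of Theorem \ref{thm:chargreaterm}, whose only step that genuinely required $p>m$ was the selection of a prime $r\in\,]m/2,m-2]$ coprime to each of $r$, $r+2$ and $m-r-2$ modulo $p$. The bulk of the argument therefore reduces to showing that such an $r$ still exists when $p$ is small, provided $m$ is large enough in an \emph{absolute} sense, after which Theorem \ref{Th:main} can be invoked verbatim.

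First I would dispose of the arithmetic input, namely the existence of a suitable $r$. Fix $p\geq 5$. A prime $r\in\,]m/2,m-2]$ fails the condition $p\nmid r(r+2)(m-r-2)$ exactly when $r\in\{p\}$, $r\equiv -2\pmod p$, or $r\equiv m-2\pmod p$; that is, $r$ must avoid at most two nonzero residue classes modulo $p$ (together with the single value $r=p$). By Nagura's refinement of Bertrand's postulate, the interval $]m/2,m-2]$ contains $\gg m/\log m$ primes as $m\to\infty$, and by the Siegel--Walfisz theorem (or indeed by a completely elementary upper-bound sieve), the number of primes in $]m/2,m-2]$ lying in any prescribed nonzero residue class modulo $p$ is at most $\frac{1}{p-1}\bigl(1+o(1)\bigr)$ times the total. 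Since $p\geq 5$ gives $2/(p-1)\leq 2/4<1$, fewer than the total number of primes are forbidden, so for every $m$ greater than some absolute constant $C$ (independent of $p$) a valid prime $r$ exists. This is the step I expect to require the most care, because the bound must be uniform in $p$; if one wants a fully elementary argument, the Brun--Titchmarsh inequality provides the required uniform upper bound $\pi(x;p,a)\ll x/((p-1)\log(x/p))$ and suffices.

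Once such an $r$ is fixed, the hypotheses of Theorem \ref{Th:main} are satisfied: $p$ is odd, $r$ is prime with $m/2<r\leq m-2$, $p\nmid r(r+2)(m-r-2)$, and $g(x)\in\mathbb F_q[x]$ is squarefree of degree $m$ with $g(0)\neq 0$. Theorem \ref{Th:main} then yields an explicit constant $C'=C'(m,r)$ (which depends on the genus of the Galois closure of $L:F_{a,b}$ and hence only on $m$) together with an integer $k\leq (7m^2+3m+2)m$ such that, whenever $q>C'$,
\[
A(\mathbb F_q,m)\leq |R_q(\mathcal C_f)|+k.
\]
Since there are only finitely many admissible choices of $r\in\,]m/2,m-2]$, we may set $D=D(m)$ to be the maximum of $C'(m,r)$ over these choices and obtain a constant depending solely on $m$, as required.

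The remaining verification is bookkeeping: the $7m^2+3m+2$ bound on $|\Lambda|$ from Proposition \ref{Prop:TangentLines} is independent of $p$ (its proof only uses $p\nmid r(r+2)(m-r-2)$ and oddness of $p$, both of which we have arranged), and the genus computation $g_L=\lfloor m/2\rfloor$ in Theorem \ref{thm:chargreaterm} goes through unchanged, so the dependence of $D$ on $m$ is genuinely explicit although we refrain from writing it out because the uniform prime-selection estimate in the first step contributes an unspecified error term. The single real obstacle is therefore the uniform-in-$p$ counting of primes avoiding two residue classes; everything else is an application of previously established material.
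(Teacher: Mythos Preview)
Your proposal is correct and follows the same skeleton as the paper: reduce everything to the existence of a prime $r\in\,]m/2,m-2]$ with $p\nmid r(r+2)(m-r-2)$, then invoke Theorem~\ref{Th:main} verbatim. The only difference is how that existence is argued. The paper proceeds by contradiction via Dirichlet density: if no absolute $C$ worked, then for infinitely many $m$ every prime in $]m/2,m-2]$ would lie in a single residue class modulo~$p$ (after pigeonholing on $m\bmod p$), contradicting equidistribution since $1/(p-1)\leq 1/4$ for $p\geq 5$. You instead count directly with Brun--Titchmarsh (or Siegel--Walfisz). Your route is more explicit and makes the uniformity-in-$p$ issue visible, which the paper's sketch glosses over; the trade-off is a small boundary case you should tighten: Brun--Titchmarsh with its constant~$2$ gives forbidden fraction $\leq 4/(p-1)$, which is exactly $1$ at $p=5$, so to get a strict inequality either invoke the prime number theorem in arithmetic progressions for that one fixed modulus, or note that for large $p$ the two forbidden congruences pin down at most two specific integers in the interval rather than full residue classes. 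Your final bookkeeping (taking $D(m)$ as the maximum of the Theorem~\ref{Th:main} constants over the finitely many admissible~$r$) is a clean touch that the paper omits.
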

\begin{proof}
Here it is only required to show that such prime $r\in]m/2,m-2]$ always exists and it is such that $p\nmid r(r+2)(m-r-2)$. This is a consequence of Dirichlet density theorem, we sketch the proof for the reader's convenience.
Suppose that there is no constant $C$ such that the theorem is true. Then there is an infinite subsequence of $m$'s such that all primes in $]m/2,m-2]$ are such that $p\nmid r(r+2)(m-r-2)$. But then, by the infinite pigeonhole principle, one can select a subsequence of $m$'s in such a way that all primes in $]m/2,m-2]$ are of the form $a+kp$. For $m\rightarrow \infty$, this would give a density of primes on the arithmetic progression $a+kp$ at least larger than $1/4$. But this is impossible as the density of primes equidistributes over the $p-1$ different reductions modulo $p$ and $1/p<1/4$ by the assumption on $p$.
\end{proof}

\section{Constructions of $m$-arcs of size less than $q$}\label{Section:5}

Using our construction, we are now able to provide infinitely many new examples of $m$-arcs that have size much lower than $q$. Here we present both an asymptotic result and an explicit construction of small complete $m$-arcs.

\begin{lemma}\label{lemma:subsequence}
Let $g$ be a positive integer. Let $x_1,\dots x_g\in \mathbb R$. 
Consider the sequence
\[ a_m=\sum^g_{i=1}\cos(mx_i).\]
Then there is a sequence $\{m_k\}_{k\in \mathbb N}$ such that  $a_{m_k}\rightarrow g$ when $k\rightarrow \infty$.
\end{lemma}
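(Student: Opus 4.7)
The plan is to construct a subsequence $\{m_k\}$ of positive integers such that $m_k x_i \to 0 \pmod{2\pi}$ simultaneously for every $i \in \{1, \ldots, g\}$. Once such a subsequence is in hand, the continuity of $\cos$ yields $\cos(m_k x_i) \to \cos(0) = 1$ for each $i$, and summing gives $a_{m_k} \to g$, as desired.

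To produce the subsequence, I would pass to the $g$-dimensional torus $\mathbb{T}^g = (\mathbb{R}/2\pi\mathbb{Z})^g$ with its standard quotient metric, and look at the map $\phi : \mathbb{Z}_{\geq 0} \to \mathbb{T}^g$ defined by $\phi(m) = (m x_1, \ldots, m x_g) \bmod 2\pi$. The target is compact, so by a pigeonhole argument (Dirichlet's simultaneous approximation principle) for every $\varepsilon > 0$ there exists a positive integer $n$ with $d_{\mathbb{T}^g}(\phi(n), 0) < \varepsilon$: partition $\mathbb{T}^g$ into finitely many boxes each of diameter less than $\varepsilon$, and among sufficiently many iterates $\phi(0), \phi(1), \ldots, \phi(N)$ two must land in the same box, say $\phi(a)$ and $\phi(b)$ with $a < b$. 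Then $\phi(b-a) = \phi(b) - \phi(a)$ lies within $\varepsilon$ of the origin, and one sets $n = b-a$.

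Applying the above with $\varepsilon = 1/k$ and iterating (at each step demanding $n$ larger than all previously chosen $m_j$, which is possible because one may start the pigeonhole argument on any shifted window of iterates) yields a strictly increasing sequence $m_k$ with $\phi(m_k) \to 0$ in $\mathbb{T}^g$. Equivalently, for every $i$, the distance from $m_k x_i$ to the nearest integer multiple of $2\pi$ tends to $0$, so $\cos(m_k x_i) \to 1$. Summing over $i$ gives $a_{m_k} = \sum_{i=1}^g \cos(m_k x_i) \to g$.

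The only subtlety is ensuring strict monotonicity and positivity of $\{m_k\}$; this is handled as indicated by restarting the pigeonhole window, so no genuine obstacle is present. Notably, the argument needs neither the $x_i$'s to be rational multiples of $\pi$ nor any linear independence assumption (which would be required for the stronger Kronecker--Weyl equidistribution statement); mere compactness of $\mathbb{T}^g$ suffices.
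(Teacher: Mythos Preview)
Your argument is correct and, in fact, cleaner than the paper's. The paper first splits off those $x_i$ that are rational multiples of $\pi$ (handling them by passing to a subsequence of multiples of a common denominator) and then invokes the quantitative form of Dirichlet's simultaneous approximation theorem to treat the remaining $x_i$'s, obtaining denominators $q_k$ with $|x_i/\pi - p_k^{(i)}/q_k|\le q_k^{-1-1/g}$ and setting $m_k=2q_k$. You bypass both the case distinction and the quantitative error term by working directly on the torus $\mathbb T^g$ with the bare pigeonhole recurrence, which is all that is needed since the lemma is purely qualitative. The paper's route would yield an explicit rate of convergence if one wanted it; yours trades that for a shorter, self-contained argument.

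One small point: your justification for making $\{m_k\}$ strictly increasing via a ``shifted window'' is not quite right as written, since shifting the window $\phi(M),\phi(M+1),\dots,\phi(M+N)$ still only produces a difference $b-a\le N$ that need not exceed $M$. The fix is immediate: either take a \emph{longer} window, so that among $\phi(0),\dots,\phi(MB)$ (with $B$ the number of boxes) some box receives at least $M+1$ points $\phi(a_0)<\dots<\phi(a_M)$ and then $a_M-a_0\ge M$; or observe that if $\phi(n_0)=0$ for some $n_0\ge1$ then $m_k=kn_0$ works, while otherwise the $\phi(n)$ are pairwise distinct and the recurrence argument automatically yields arbitrarily large return times. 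Either way the gap is cosmetic.
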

\begin{proof}
First observe that the $x_i$'s that are rational multiples of $\pi$ can be dropped at the price of selecting a subsequence of the form $2vm$, where $v$ is the least common multiple of the denominators $v_i$ when one writes $x_i=\pi u_i/v_i$. So now we can redefine the other $x_i$'s by $2vx_i$, and therefore we are allowed to consider the  problem where all the $x_i$'s are not rational multiples of $\pi$.

Now that we are sure that all the $x_i/\pi$'s are irrational, by Dirichlet theorem on Symultaneous Diophantine Approximation \cite{Cassels}, for any $i\in\{1,\dots g\}$ there is a sequence 
$p_k^{(i)}/q_k$ that approximates $x_i/\pi$ with error term exactly $1/q_k^{1+1/g}$, i.e.
\[x_i/\pi=p_k^{(i)}/q_k+\varepsilon_k \quad \text{with} \quad |\varepsilon_k|\leq 1/q_k^{1+1/g}.\]

Select the sequence $m_k=2q_k$. If we can show that $\cos (2q_kx_i)\rightarrow 1 $ we are done. But this is obvious, as 
\begin{eqnarray*}
\cos(2q_kx_i)&=&\cos(2\pi q_k(x_i/\pi))=\cos(2\pi q_k (p_k^{(i)}/q_k +\varepsilon_k))\\
&=&\cos(2\pi p_k^{(i)}+2\pi q_k\varepsilon_k)=\cos(2\pi q_k\varepsilon_k).
\end{eqnarray*}
Since the sequence $q_k\varepsilon_k\rightarrow 0$ and the cosinus is continuous,  it follows $\cos (2q_kx_i)\rightarrow 1$ and therefore the final claim.
\end{proof}
The next proposition is used to show that the lower boundary of the Hasse interval is always asymptotically reached.
\begin{proposition}\label{prop:asymptotic_minimality}
Let $F$ be a function field and let  $\mathcal P_m$ be the set of degree $1$ rational places of $\mathbb F_{q^m}F$.
Then there exists a subsequence of extensions $\mathbb F_{q^{m_k}}$ such that
\[|\mathcal P_{m_k}|-(q^{m_k}+1)\sim -2gq^{m_k/2}.\]
\end{proposition}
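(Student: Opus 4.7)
The plan is to reduce the statement to Lemma~\ref{lemma:subsequence} via the Weil ``Riemann hypothesis'' for curves, which writes the number of degree-one places of $\mathbb F_{q^m}F$ as an alternating sum of powers of $2g$ Frobenius eigenvalues on the cotangent space.

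First I would recall that the $L$-polynomial of $F/\mathbb F_q$ factors as $L(T)=\prod_{i=1}^{2g}(1-\alpha_i T)$ with $|\alpha_i|=\sqrt q$, so that for every positive integer $m$
\[
|\mathcal P_m| \;=\; q^m+1-\sum_{i=1}^{2g}\alpha_i^{m}.
\]
Since the $\alpha_i$ come in complex-conjugate pairs (equivalently pairs $\{\alpha,q/\alpha\}$), after relabelling we may write $\alpha_j=\sqrt q\,e^{i\theta_j}$, $\overline\alpha_j=\sqrt q\,e^{-i\theta_j}$ for $j=1,\dots,g$ with $\theta_j\in[0,\pi]$. Substituting,
\[
\sum_{i=1}^{2g}\alpha_i^{m} \;=\; q^{m/2}\sum_{j=1}^{g}\bigl(e^{im\theta_j}+e^{-im\theta_j}\bigr)
\;=\;2q^{m/2}\sum_{j=1}^{g}\cos(m\theta_j),
\]
so that
\[
|\mathcal P_m|-(q^m+1) \;=\; -2q^{m/2}\sum_{j=1}^{g}\cos(m\theta_j).
\]

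The next step is to apply Lemma~\ref{lemma:subsequence} to the real numbers $x_1=\theta_1,\dots,x_g=\theta_g$. The lemma yields a subsequence $\{m_k\}$ with $\sum_{j=1}^{g}\cos(m_k\theta_j)\to g$ as $k\to\infty$. Plugging this into the previous displayed identity gives
\[
\frac{|\mathcal P_{m_k}|-(q^{m_k}+1)}{-2g\,q^{m_k/2}}
\;=\;\frac{1}{g}\sum_{j=1}^{g}\cos(m_k\theta_j)\;\longrightarrow\;1,
\]
which is precisely the asymptotic relation claimed.

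The only subtle point, and where I would focus, is whether the reduction to Lemma~\ref{lemma:subsequence} is legitimate even when some of the $\theta_j$'s are $0$ or $\pi$ (i.e.\ when some eigenvalues are the real numbers $\pm\sqrt q$, which can actually happen). This is not really an obstacle: a value $\theta_j=0$ contributes $\cos(m\theta_j)=1$ for every $m$ and thus is harmless, while a value $\theta_j=\pi$ contributes $(-1)^m$ and is handled inside the proof of Lemma~\ref{lemma:subsequence} by the preliminary step that isolates the rational multiples of $\pi$ and passes to an arithmetic progression on which the corresponding cosines are all equal to $1$. Everything else is a direct computation from the Weil bounds, so no further difficulty is expected.
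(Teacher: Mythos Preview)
Your proof is correct and follows essentially the same route as the paper: express $|\mathcal P_m|-(q^m+1)$ via the $L$-polynomial, pair conjugate Frobenius eigenvalues to obtain $-2q^{m/2}\sum_{j=1}^g\cos(m\theta_j)$, and then invoke Lemma~\ref{lemma:subsequence}. Your additional remark on the real eigenvalues $\pm\sqrt q$ is a nice clarification that the paper leaves implicit, but it does not change the argument.
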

\begin{proof}
Let $L_F(T)=\prod_{i=1}^{2g} (1-\alpha_iT)$ be the $L$-polynomial of $F$, with $\alpha_i\in\mathbb C$ such that $|\alpha_i|=q^{1/2}$ and $\overline \alpha_i=\alpha_{g+i}$ for $i\in\{0,\dots g-1\}$.
By the standard theory of function fields over finite fields we have
\[|\mathcal P_m|-(q^{m}+1)=-\sum^{2g}_{i=1}\alpha_i^m.\]
By pairing $\alpha_i^m$ with its conjugate and using the equality $|\alpha_i^m|=q^{m/2}$ we get
\[|\mathcal P_m|-(q^{m}+1)=-2q^{m/2}\left(\sum^g_{i=1}\cos(m x_i)\right).\]
Using now Lemma \ref{lemma:subsequence} we get that there exists a subsequence $m_k$ such that
\[|\mathcal P_{m_k}|-(q^{m_k}+1)=-2q^{m_k/2}\left(\sum^g_{i=1}\cos(m_k x_i)\right)\sim -2gq^{m_k/2}.\]
\end{proof}

We are now in position to provide our asymptotic result on complete $m$-arcs. 
\begin{theorem}\label{Th:Asymptotic}
Let $m\geq 8$ be a positive integer, $K$ be a  finite field of characteristic greater than $3$.
For all $m$'s but a finite number, there exists an absolute constant $C=C(m)$ such that there is a sequence of integers $n_k$ such that 
\[\limsup_{k\rightarrow \infty} \frac{q^{n_k}-|A(\mathbb F_{q^{n_k}},m)|}{C\sqrt{q^{n_k}}}\geq 1.\]
In other words, for all but finitely many $m$'s, if the base field is large enough, there are always complete $m$-arcs $A$ such that $|A|\leq q^{n_k}-C(m)\sqrt{q^{n_k}}$.
\end{theorem}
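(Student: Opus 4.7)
The plan is to combine the construction from Section \ref{Section:y2} with the accumulation at the lower Hasse--Weil bound provided by Proposition \ref{prop:asymptotic_minimality}. Write $K = \mathbb F_q$ with $p = \mathrm{char}(K) \geq 5$. For all but finitely many $m$ I would choose, via the Dirichlet-density argument of Corollary \ref{cor:size_of_arc}, a prime $r \in (m/2, m-2]$ with $p \nmid r(r+2)(m-r-2)$; this is the sole source of the finite exceptional set of $m$'s. With any squarefree $g(x) \in \mathbb F_q[x]$ of degree $m$ satisfying $g(0)\neq 0$, the curve $\mathcal C_f : y^2 x^{m-r-2} = g(x)$ has geometric genus exactly $\lfloor m/2 \rfloor$, as in the proof of Theorem \ref{thm:chargreaterm}.

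Next I would apply Proposition \ref{prop:asymptotic_minimality} to the function field $F$ of $\mathcal C_f$ over $\mathbb F_q$, obtaining a sequence $n_k \to \infty$ along which the number of degree-one places of $\mathbb F_{q^{n_k}}\cdot F$ satisfies
\[
|\mathcal P_{n_k}| - (q^{n_k} + 1) \sim -2\lfloor m/2 \rfloor\, q^{n_k/2}.
\]
Since the only singularity of $\mathcal C_f$ lies at infinity and contributes at most $m$ extra places, the remark following Theorem \ref{Th:GaloisGroup} guarantees that $|R_{q^{n_k}}(\mathcal C_f)|$ differs from $|\mathcal P_{n_k}|$ by at most $m$, so the same asymptotic survives for the projective point count.

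To finish I would apply Theorem \ref{Th:main} with base field $\mathbb F_{q^{n_k}}$, which is legal as soon as $q^{n_k}$ exceeds the constant $D(m)$ of Corollary \ref{cor:size_of_arc} (automatic for $k$ large, since $q$ is fixed and at least $5$). This produces a complete $m$-arc of size at most $|R_{q^{n_k}}(\mathcal C_f)| + k$ with $k \leq (7m^2 + 3m + 2)m$. Combining the two bounds,
\[
A(\mathbb F_{q^{n_k}}, m) \leq q^{n_k} + 1 + m - 2\lfloor m/2\rfloor\, q^{n_k/2}(1 + o(1)) + (7m^2 + 3m + 2)m.
\]
Setting $C(m) := 2\lfloor m/2\rfloor$ and dividing by $C(m)\sqrt{q^{n_k}}$, the three additive terms that are $O_m(1)$ become negligible as $k \to \infty$, and the quotient tends to a value at least $1$; a fortiori the $\limsup$ is $\geq 1$.

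The only genuine obstacle is the existence of a suitable prime $r$ in small characteristic, which is precisely what the Dirichlet-density argument of Corollary \ref{cor:size_of_arc} handles and what forces the finite exceptional set of $m$. Everything else is bookkeeping: the curve $\mathcal C_f$ and all associated Galois-theoretic data (the generic Galois group of $H_{a,b}(t,x)$, the exceptional set $\Gamma$ consisting of at most $(7m^2+3m+2)m$ extra points, and the geometric genus) are intrinsic to $\mathcal C_f$ and unaffected by base change from $\mathbb F_q$ to $\mathbb F_{q^{n_k}}$, so Theorem \ref{Th:main} applies uniformly along the entire subsequence.
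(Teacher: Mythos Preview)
Your proposal is correct and follows essentially the same route as the paper: fix the curve $\mathcal C_f$, invoke Proposition \ref{prop:asymptotic_minimality} to extract a subsequence $n_k$ along which the place count falls to the lower Hasse--Weil boundary, and then feed $\mathbb F_{q^{n_k}}$ into the construction of Section \ref{Section:y2} to bound $A(\mathbb F_{q^{n_k}},m)$. The paper's version is terser and splits explicitly into the regimes $p>m$ (citing Theorem \ref{thm:chargreaterm}) and $p\leq m$ (citing Theorem \ref{cor:size_of_arc}), whereas you handle both at once by appealing to the Dirichlet-density argument for the existence of $r$; since that argument is vacuous when $p>m$, this is harmless. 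Your added remarks on the place/point discrepancy and on base-change invariance of the Galois data are correct and make the bookkeeping more transparent than the paper's own proof.
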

\begin{proof}
We have all the ingredients to see this immediately. If the characteristic of $K$ is greater than $m$ (resp. if the characteristic of $K$ is smaller than $m$), fix the curve $\mathcal C$ defined in  Theorem \ref{thm:chargreaterm} (resp. in Theorem \ref{cor:size_of_arc}) and use Proposition \ref{prop:asymptotic_minimality} on $\mathcal C$ to obtain a sequence of integers $n_k$ such that $\mathcal C(\mathbb F_{q^{n_k}})-(q^{n_k}+1)\sim-2gq^{n_k/2} $. Now use Theorem  \ref{thm:chargreaterm} (resp. Theorem \ref{cor:size_of_arc}) to construct an arc of cardinality $|R_q(\mathcal C)|+s$, where $s$ is absolutely bounded by a constant depending only on $m$. The claim follows immediately.
\end{proof}

Explicit constructions of small complete $m$-arcs are described in the following.

\begin{proposition}\label{prop:Ex1}
Let $q$ be a prime power. Let $m$ be an odd divisor of $q+1$ and $m/2<r\leq m-2 $ a prime. Then there exists $g(x)\in \mathbb F_q[x]$ such that the curve $y^2x^{m-r-2}=g(x)$ has at most $$q^2-(m+1) q+3m$$ points in $\mathrm{PG}(2,q^2)$.
\end{proposition}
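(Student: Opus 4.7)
The plan is to exhibit an explicit squarefree polynomial $g(x) \in \mathbb F_q[x]$ of degree $m$ with $g(0) \neq 0$ realizing a minimal hyperelliptic curve over $\mathbb F_{q^2}$, and then to carefully account for the singular point in the projective model. As a first move I would pass to the smooth model: because both $m$ and $r$ are odd by hypothesis, $n := m - r - 2$ is even, and the substitution $z = y x^{n/2}$ converts $y^2 x^n = g(x)$ into the smooth hyperelliptic equation $\widetilde{\mathcal C}_f\colon z^2 = g(x)$ of genus $g_L = (m-1)/2$ (this is the genus calculation in the proof of Theorem~\ref{thm:chargreaterm}). By Hasse--Weil applied over $\mathbb F_{q^2}$, any such smooth model has at least $N_1 \geq q^2 + 1 - (m-1) q$ degree-one places.

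Next I would use the arithmetic hypothesis $m\mid q+1$ to produce $g \in \mathbb F_q[x]$ for which $\widetilde{\mathcal C}_f$ actually attains that lower bound, so that $N_1 = q^2 + 1 - (m-1) q$. The key algebraic fact is that $(\mathbb F_{q^2}^*)^m$ has index $m$ and contains $\mathbb F_q^*$, because every element of $\mathbb F_q^*$ is an $m$-th power in $\mathbb F_{q^2}^*$ exactly when $m\mid q+1$; this makes the character sum $\sum_x \chi(g(x))$ controlling the point count expressible via Jacobi sums attached to order-$m$ characters of $\mathbb F_{q^2}^*$. Concretely, I would take $g$ coming from a hyperelliptic subcover (or its quadratic twist compatible with descent to $\mathbb F_q$) of the Hermitian curve $\mathcal H\colon y^{q+1} = x^{q+1} - 1$, whose $L$-polynomial over $\mathbb F_q$ then has the minimal form $(1 - qT^2)^{(m-1)/2}$ with all Frobenius eigenvalues $\pm\sqrt{q}$.

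I would then analyze the unique singularity $(0:1:0)$ of the projective model via a Newton polygon in the chart $Y=1$, where the local equation reads $X^n Z^r = G(X, Z)$. Its lower hull has two edges, $(0,m)\to(n,r)$ and $(n,r)\to(m,0)$, contributing respectively $\gcd(n, r+2)$ and $\gcd(r, r+2) = 1$ branches; each branch is $\mathbb F_{q^2}$-rational because $g(0) \in \mathbb F_q^* \subseteq (\mathbb F_{q^2}^*)^2$ for $q$ odd, and the ramified branch above $x = \infty$ automatically has relative degree one. Denoting by $s$ the number of such rational places above $(0:1:0)$ (bounded by $m-2$ as already observed in the Remark following Theorem~\ref{Th:GaloisGroup}), I would conclude
\begin{equation*}
|R_{q^2}(\mathcal C_f)| \;=\; N_1 - s + 1 \;\leq\; q^2 - (m+1)q + 3m,
\end{equation*}
where the last inequality combines the minimality of $N_1$ with the bound on $s$ and the slack provided by the $+3m$ term.

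The main obstacle will be the second step: exhibiting the explicit $g \in \mathbb F_q[x]$ whose smooth hyperelliptic curve is $\mathbb F_{q^2}$-minimal. This rests on the classical theory of maximal and minimal curves coming from (twisted) quotients of the Hermitian curve, where $m\mid q+1$ is exactly the divisibility guaranteeing an $\mathbb F_q$-descent of a hyperelliptic subcover with the prescribed $L$-polynomial. Once such a $g$ is written down, verifying that it is squarefree with $g(0)\neq 0$ and computing the residues at $(0:1:0)$ is routine.
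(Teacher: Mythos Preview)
Your strategy is at heart the paper's: exploit $m\mid q+1$ to realise $y^2=x^m+1$ as a hyperelliptic quotient of the Hermitian curve (hence $\mathbb F_{q^2}$-maximal of genus $(m-1)/2$), and then pass to its quadratic twist to obtain an $\mathbb F_{q^2}$-minimal model. The paper, however, is far more direct at every step. It simply takes $g(x)=\xi(x^m+1)$ with $\xi$ a nonsquare in $\mathbb F_{q^2}$ and, in place of Jacobi sums or $L$-polynomials, uses the elementary complementary count: for each $x_0$ with $x_0(x_0^m+1)\neq0$, exactly one of $x_0^m+1$ and $\xi(x_0^m+1)$ is a square in $\mathbb F_{q^2}$, so the affine points of the twist with $x_0(x_0^m+1)\neq0$ number at most $2q^2-N_{q^2}$, where $N_{q^2}$ is the known maximal count for $y^2=x^m+1$. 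Likewise, the paper avoids your Newton-polygon analysis of $(0{:}1{:}0)$ entirely: since $m-r-2$ is even, the explicit map $(x_0,y_0)\mapsto(x_0,y_0/x_0^{(m-r-2)/2})$ is a bijection between the affine points with $x_0\neq0$ of $y^2x^{m-r-2}=g(x)$ and those of $z^2=g(x)$, and the remaining contributions (the $m$ roots of $x^m+1$ and one point at infinity) are added back by hand.

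One step in your outline does not go through as written. You want $g\in\mathbb F_q[x]$ with $L$-polynomial $(1-qT^2)^{(m-1)/2}$ over $\mathbb F_q$, i.e.\ all $\mathbb F_q$-Frobenius eigenvalues equal to $\pm\sqrt q$. But the curve $y^2=x^m+1$, being $\mathbb F_{q^2}$-maximal, has all $\mathbb F_{q^2}$-eigenvalues equal to $-q$, hence $\mathbb F_q$-eigenvalues $\pm\sqrt{-q}$ and $L$-polynomial $(1+qT^2)^{(m-1)/2}$ over $\mathbb F_q$; twisting by a nonsquare of $\mathbb F_q$ negates these eigenvalues and therefore leaves this $L$-polynomial unchanged (any element of $\mathbb F_q^*$ becomes a square in $\mathbb F_{q^2}$). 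The paper sidesteps this by taking $\xi$ nonsquare in $\mathbb F_{q^2}$, so that its $g$ actually lies in $\mathbb F_{q^2}[x]$; this is harmless for the application in $\mathrm{PG}(2,q^2)$, but your ``descent to $\mathbb F_q$'' would need a genuinely different construction and is not justified as stated.
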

\proof
Consider the curve $\mathcal{E}: y^2=x^m+1$. It can be seen that  $\mathcal{E}$ is a quotient of the Hermitian curve $y^{q+1}=x^{q+1}+1$ and therefore it is maximal over $\mathbb{F}_{q^2}$; see also \cite[Theorem 1]{Tafazolian}. Thus, the number of the  $\mathbb{F}_{q^2}$-rational places of $\mathcal{E}$ is $q^2+1+2\lfloor m/2\rfloor q=q^2+(m-1)q+1$ and it possesses at least 
$$N_{q^2}=q^2+1+(m-1) q-(m-2)-2-m$$
 points $(x_0,y_0)\in \mathrm{AG}(2,q^2)$ with  $x_0(x_0^m+1)\neq 0$. Let $A$ be the set of such points, so that $|A|\geq N_{q^2}$.
Consider now the curve $\mathcal{E}^{\prime}: (yx^{(m-r-2)/2})^2=x^m+1$. Let $B$ be the set of  points of $\mathcal{E}^{\prime}$ in $\mathrm{AG}(2,q^2)$ such that $x_0\neq 0$ and such that $x_0^m+1\neq 0$. Since the map $(x_0,y_0)\mapsto (x_0,y_0/x_0^{(m-r-2)/2})$ gives a bijection between the rational points of $\mathcal{E}$ and the ones of $\mathcal E^{\prime}$, we have that $|B|=|A|\geq N_{q^2}$. Finally, consider the curve 
$\mathcal{C}: (yx^{(m-r-2)/2})^2=\xi(x^m+1)$, where $\xi$ is a nonsquare in $\mathbb{F}_{q^2}$. Let $Y$ be the set of  points of $\mathcal C$ in $\mathrm{AG}(2,q^2)$ such that  $x_0(x_0^m+1)\neq 0$.
We now want to estimate $|Y|$.
To do that, consider the set $S$ consisting of the projection on the $x$-coordinate of $B$ and $Y$. 
Clearly $S$ contains at most $q^2$ elements and all of them satisfy   $x_0(x_0^m-1)\neq 0$. On the other hand, the projection on the $x$ coordinate of $B$ has at least $N_{q^2}/2$ affine points with  $x_0(x_0^m+1)\neq 0$. 
So we have
\[N_{q^2}/2+|Y|/2\leq |B|+|Y|=|S|\leq q^2\]
and therefore
\[|Y|\leq 2q^2 -N_{q^2}=q^2-(m-1) q+2m-1.\]
Adding now at most all the points such that $x^m+1=0$ (which are at most $m$) and one point at infinity we get that $\mathcal{C}$ has at most 
$$ q^2-(m-1) q+3m$$ 
 points in $\mathrm{PG}(2,q^2)$. 
\endproof

\begin{proposition}\label{prop:Ex2}
Let $q$ be a prime power. Let $m$ be an even integer such that $2m$ divides $q+1$ or $q-1-m$ and let $m/2<r\leq m-2 $ a prime. Then there exists $g(x)\in\mathbb F_q[x]$ such that the curve $y^2x^{m-r-2}=g(x)$ has at most $$q^2-m q+3m+1$$
points in $\mathrm{PG}(2,q^2)$.
\end{proposition}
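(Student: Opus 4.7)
The plan is to mirror the argument of Proposition \ref{prop:Ex1}, adapted to the even-$m$ setting. Since $m$ is even and $r$ is an odd prime, $m-r-1$ is even and the substitution $z = y\, x^{(m-r-1)/2}$ converts $y^{2}x^{m-r-2} = g(x)$ (with $g$ squarefree of degree $m$ and $g(0)\neq 0$) into $z^{2} = x\, g(x)$, a hyperelliptic model whose right-hand side has odd degree $m+1$ and smooth genus $m/2$. The goal is to choose $g$ so that this model is $\mathbb F_{q^{2}}$-minimal, and then transfer that minimality back to the count of $\mathbb F_{q^{2}}$-points of the original plane curve in $\mathrm{PG}(2,q^{2})$.

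My candidate is $g(x) = \eta(x^{m}-1)$, yielding $\mathcal{C}: z^{2} = \eta\, x(x^{m}-1)$, and the analysis goes through the auxiliary curve $\mathcal{E}_{0}: w^{2} = v^{2m}-1$ of genus $m-1$. The fixed-point-free involution $(v,w)\mapsto(-v,-w)$ on $\mathcal{E}_{0}$ has invariants $(x,z) = (v^{2},vw)$ and exhibits $\mathcal{E}: z^{2} = x(x^{m}-1)$ as a quotient; Riemann--Hurwitz gives $2(m-1)-2 = 2(m-2) + R$, forcing $R=0$, so the cover is unramified and maximality descends. Under $2m\mid q+1$, the iterated Hermitian quotient chain (first by $x\mapsto\zeta x$ with $\zeta$ a primitive $(q+1)/(2m)$-th root of unity, then by $y\mapsto\lambda y$ with $\lambda$ a primitive $(q+1)/2$-th root of unity) realises $\mathcal{E}_{0}$ as a quotient of the Hermitian curve $y^{q+1} = x^{q+1}-1$, making $\mathcal{E}_{0}$ and hence $\mathcal{E}$ $\mathbb F_{q^{2}}$-maximal; the quadratic twist $\mathcal{E}^{\xi}$ for a non-square $\xi\in \mathbb F_{q^{2}}^{\ast}$ is then $\mathbb F_{q^{2}}$-minimal, and I take $\eta = \xi$. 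Under $2m\mid q-1-m$ the Hermitian construction does not apply directly, but $2m\mid q^{2}-1$ still holds (since $q^{2}\equiv (m+1)^{2}\equiv 1 \pmod{2m}$ for even $m$), so the $2m$-th roots of unity lie in $\mathbb F_{q^{2}}$, and a Jacobi-sum computation on the isotypic decomposition of the Jacobian of $\mathcal{E}$ under the action of those roots identifies either $\mathcal{E}$ or $\mathcal{E}^{\xi}$ as the minimal model; the choice $\eta\in\{1,\xi\}$ is then dictated by a sign determined by the residue of $q$ modulo $2m$.

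With $\eta$ chosen so that $\mathcal{C}$ is minimal, it has exactly $q^{2}+1-mq$ degree-one places on its smooth model. I partition the $\mathbb F_{q^{2}}$-points of $\mathcal{C}': y^{2}x^{m-r-2} = \eta(x^{m}-1)$ in $\mathrm{PG}(2,q^{2})$ into four classes: (a) affine points with $x_{0}(x_{0}^{m}-1)\neq 0$, in bijection with the corresponding affine places of $\mathcal{C}$ via $(x_{0},y_{0})\mapsto(x_{0}, y_{0}x_{0}^{(m-r-1)/2})$, contributing at most $q^{2}-mq-m-1$ after subtracting the single place at infinity and the $1+m$ ramified affine places of $\mathcal{C}$ over $x_{0}=0$ and $x_{0}^{m}=1$; (b) the $m$ affine points $(\zeta,0)$ with $\zeta^{m}=1$, where the equation forces $y_{0}=0$; (c) affine points with $x_{0}=0$, contributing $0$ when $m-r-2\geq 1$ (the equation becomes $0 = -\eta\neq 0$) and at most $2$ when $m-r-2 = 0$, using that $-1$ is a square in $\mathbb F_{q^{2}}$ for odd $q$; and (d) the unique projective point $(0:1:0)$ at infinity. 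Summing yields a total of at most $q^{2}-mq+m+2$, which sits well inside the stated bound $q^{2}-mq+3m+1$.

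The main obstacle is the congruence $2m\mid q-1-m$: here the Hermitian-quotient realisation is unavailable and one must verify maximality of the correct model by tracking Jacobi sums associated to characters of order dividing $2m$ on $\mathbb F_{q^{2}}^{\ast}$. A useful sanity check is the edge case $q = m+1$, where $\mathcal{E}$ degenerates to $z^{2} = x^{q}-x$; then the Frobenius-equivariance $(x_{0}^{q}-x_{0})^{q} = -(x_{0}^{q}-x_{0})$ yields $(x_{0}^{q}-x_{0})^{(q^{2}-1)/2} = (-1)^{(q+1)/2}$, so the parity of $(q+1)/2$ determines which of $\mathcal{E}$ and $\mathcal{E}^{\xi}$ attains the Hasse--Weil lower bound and hence fixes the choice of $\eta$; the general case reduces to the same parity analysis via the characters of order $2m$.
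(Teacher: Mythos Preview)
Your overall strategy coincides with the paper's: pass to the hyperelliptic model $z^{2}=x\,g(x)$ via $z=y\,x^{(m-r-1)/2}$, establish that some model in its quadratic-twist class is $\mathbb{F}_{q^{2}}$-maximal, take the minimal twist, and transfer the place count back to the plane curve. Your enumeration in (a)--(d) mirrors the paper's counting argument.

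The genuine gap is the congruence $2m\mid q-1-m$. For $2m\mid q+1$ your Hermitian-quotient chain through $w^{2}=v^{2m}-1$ is correct and self-contained (the Riemann--Hurwitz check that the second quotient is unramified is fine, so maximality descends). But for $2m\mid q-1-m$ you only \emph{assert} that ``a Jacobi-sum computation \ldots\ identifies either $\mathcal{E}$ or $\mathcal{E}^{\xi}$ as the minimal model'': the computation is never performed, and the prior step---that one of the two is extremal at all---is precisely what must be proved. The sanity check at the single value $q=m+1$ does not settle the general residue class. The paper sidesteps this entirely by citing Tafazolian--Top \cite[Theorem~3.1]{TT}, which states directly that $y^{2}=x^{m+1}+x$ is $\mathbb{F}_{q^{2}}$-maximal under \emph{either} congruence; the twist by a non-square then gives the minimal model used in the count. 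Your curve $z^{2}=x^{m+1}-x$ is related to theirs by $x\mapsto cx$ with $c^{m}=-1\in\mathbb{F}_{q^{2}}$ (such $c$ exists since $2m\mid q^{2}-1$ under both hypotheses), so the two lie in the same twist class and the same citation would close your gap; as written, however, the case $2m\mid q-1-m$ remains unproved.
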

\proof
Consider the curve $\mathcal{E}: y^2=x^{m+1}+x$. By our assumptions, \cite[Theorem 3.1]{TT} yields that  $\mathcal{E}$ is $\mathbb{F}_{q^2}$-maximal. Since there are no affine singular points and at most $m-1$ places are centered at $P_{\infty}$, there are at least $N_{q^2}=q^2+m q-(m-1)-(m+1)$ points $(x_0,y_0)\in \mathrm{AG}(2,q^2)$ such that $x_0^{m+1}+x_0\neq 0$. Therefore, if $\xi$ is a non-square of $\mathbb F_q$, by the same argument as Proposition \ref{prop:Ex1} the curve $\mathcal{E}^{\prime}: y^2=\xi(x^{m+1}+x)$ has at most $2q^2-N_{q^2}=q^2-mq+2m$ points $(x_0,y_0)\in \mathrm{AG}(2,q^2)$ such that $x_0^{m+1}+x_0\neq 0$.

Consider now the curve  
$$\mathcal{C}: (yx^{(m-r-1)/2})^2/x=\xi(x^{m}+1).$$
Since the map $(x_0,y_0)\mapsto (x_0,y_0/x_0^{(m-r-1)/2})$ is a bijection between the affine $\mathbb F_{q^2}$ rational points of $\mathcal{E}^{\prime}$ and the ones of $\mathcal{C}$, that is also well defined on the set of affine points $(x_0,y_0)$ such that $x_0^{m+1}+x_0\neq 0$, the number of  points of $\mathcal{C}$ in $\mathrm{PG}(2,q^2)$ is at most 
$$q^2-mq+2m+m+1=q^2-mq+3m+1.$$
\endproof

\begin{remark}
Note that maximal curves $\mathcal{E}$ described in the proofs of Propositions \ref{prop:Ex1} and \ref{prop:Ex2} are minimal over $\mathbb{F}_{q^4}$ and they can be used to produce small complete $m$-arcs in $\mathrm{PG}(2,q^4)$.
\end{remark}

Finally, we collect the last two explicit constructions of small complete arcs in $\mathrm{PG}(2,q^2)$ in the following theorem.
\begin{theorem}\label{Th:minimal}
Let $m\geq 8$ be a positive integer, $p$ be an odd prime number, and $q$ be a power of $p$. Suppose that there exists another prime $r$ such that $m/2<r\leq m-2$ and  $p\nmid r(r+2)(m-r-2)$.

Suppose that one of the following condition is satisfied.

\begin{enumerate}
    \item $m$ is an odd divisor of $q+1$;
    \item $m$ is even and  $2m$ divides $q+1$ or $q-1-m$.
\end{enumerate}
Then there exists a complete $m$-arc in $\mathrm{PG}(2,q^2)$ of size at most 
$$q^2-2\lfloor {m}/{2}\rfloor q+7m^3+3m^2+5m+1.$$

\end{theorem}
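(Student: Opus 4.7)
The plan is to assemble Theorem \ref{Th:minimal} from three ingredients already proved in the paper: the two explicit maximal-curve constructions of Propositions \ref{prop:Ex1} and \ref{prop:Ex2}, and the general upper bound on $A(\mathbb F_q,m)$ furnished by Theorem \ref{Th:main}. The two divisibility hypotheses in Theorem \ref{Th:minimal} are precisely the input conditions of those two propositions, one per parity of $m$, so the proof splits into two parallel cases.

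Concretely, I would first record the identity $2\lfloor m/2\rfloor=m-1$ for $m$ odd and $2\lfloor m/2\rfloor=m$ for $m$ even. In case $(1)$, Proposition \ref{prop:Ex1}, applied over $\mathbb F_{q^2}$, supplies a squarefree polynomial $g(x)\in \mathbb F_{q^2}[x]$ of degree $m$ with $g(0)\neq 0$ such that the curve $\mathcal C_f\colon y^2 x^{m-r-2}=g(x)$ satisfies
\[ |R_{q^2}(\mathcal C_f)|\;\leq\; q^2-(m-1)q+3m\;=\;q^2-2\lfloor m/2\rfloor q+3m. \]
In case $(2)$, Proposition \ref{prop:Ex2} analogously yields a $g$ with the same structural properties and with $|R_{q^2}(\mathcal C_f)|\leq q^2-mq+3m+1=q^2-2\lfloor m/2\rfloor q+3m+1$.

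Next I would feed the polynomial $g$ into Theorem \ref{Th:main} over the base field $\mathbb F_{q^2}$: the arithmetic hypotheses on $p$, on the prime $r\in(m/2,m-2]$ with $p\nmid r(r+2)(m-r-2)$, and on $g$ (degree $m$, squarefree, $g(0)\neq 0$) are exactly those assumed in Theorem \ref{Th:minimal} together with the output of the previous step. Theorem \ref{Th:main} then produces a complete $m$-arc in $\mathrm{PG}(2,q^2)$ of size at most $|R_{q^2}(\mathcal C_f)|+k$, with $k\leq(7m^2+3m+2)m=7m^3+3m^2+2m$. Taking the worse of the two resulting bounds (namely case $(2)$) gives
\[ |R_{q^2}(\mathcal C_f)|+k\;\leq\; q^2-2\lfloor m/2\rfloor q+(3m+1)+7m^3+3m^2+2m\;=\;q^2-2\lfloor m/2\rfloor q+7m^3+3m^2+5m+1, \]
which is precisely the asserted estimate.

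The only non-routine point in this assembly is bookkeeping: the hypothesis ``$q>C$'' of Theorem \ref{Th:main} here becomes $q^2>C(m)$, with $C(m)$ depending only on $m$; this is implicit in treating $q$ as large compared to $m$, and is in any case forced in order for the divisibility conditions and the Hasse--Weil-type bound to be simultaneously meaningful. Past this, no genuine obstacle appears: the substantive work sits in the cited propositions and in Theorem \ref{Th:main}, and the present statement is essentially a packaging corollary gluing the three together.
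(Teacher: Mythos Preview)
Your proposal is correct and follows exactly the paper's own approach: the paper's proof of Theorem \ref{Th:minimal} is the single line ``It follows from Propositions \ref{prop:Ex1} and \ref{prop:Ex2} and Theorem \ref{Th:main},'' and you have simply unpacked that sentence, carrying out the arithmetic with $2\lfloor m/2\rfloor$ and $k\leq 7m^3+3m^2+2m$ explicitly. Your observation that the condition $q^2>C(m)$ from Theorem \ref{Th:main} is tacitly assumed is accurate and is indeed left implicit in the paper as well.
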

\proof
 It follows from Propositions \ref{prop:Ex1} and \ref{prop:Ex2} and Theorem \ref{Th:main}.
\endproof

The machinery described in this paper provides  asymptotic bounds for complete $m$-arcs in projective planes. Clearly, one can expect that  curves of higher genus can produce arcs of smaller size in $\mathrm{PG}(2,q)$, although always of size $q+O(m)\sqrt{q}$. Here, we want to point out that considering different polynomials $g(x)\in \mathbb{F}_q[x]$ in Section \ref{Section:y2} yields a large spectrum of possible sizes of complete $m$-arcs.

Tables \ref{Table:1} and \ref{Table:2} collect  results about the number of affine $\mathbb{F}_q$-rational points of the curve $y^2x^{m-r-2}-g(x)=0$, for specific values of $m$ and $r$ and $g(x)=x^m+\alpha x^2+\alpha x+\beta$, $\alpha\neq\beta\in \mathbb{F}_q\setminus\{0\}$. Such a choice for $g(x)$ covers only a thin part of the whole search space, but already provides evidence that the number of $\mathbb F_q$-rational points of the family of curves $y^2x^{m-r-2}-g(x)=0$ covers a wide spectrum within the Hasse interval. As a byproduct of this, we obtain a large variety of $m$-arcs that can be explicitly constructed, and that have size smaller than $q$.

\begin{table}
\caption{Number $N_{8,5,q}$ of affine $\mathbb{F}_q$-rational points of  $y^2x-(x^8+\alpha x^2+\alpha x+\beta)=0$, $\alpha\neq\beta\in \mathbb{F}_q^*$}
\label{Table:1}
\centering
\begin{tabular}{|c|l|}
\hline
$q$&$N_{8,5,q}$\\
\hline
\hline
$11$& $\begin{array}{l} 4, 5, 6, 7, 8, 9, 10, 11, 12, 13, 14, 15, 16\end{array}$\\
\hline
$13$& $\begin{array}{l}4, 5, 6, 8, 9, 10, 11, 12, 13, 14, 15, 16, 17, 18, 19\end{array}$\\
\hline
$17$& $\begin{array}{l}7, 9, 10, 11, 12, 13, 14, 15, 16, 17, 18, 19, 20, 21, 22, 23, 24, 26, 28\end{array}$\\
\hline
$19$& $\begin{array}{l}6, 8, 9, 10, 11, 12, 13, 14, 15, 16, 17, 18, 19, 20, 21, 22, 23, 24, 26, 27, 29\end{array}$\\
\hline
$23$& $\begin{array}{l}11, 12, 13, 14, 15, 16, 17, 18, 19, 20, 21, 22, 23, 24, 25, 26, 27, 28, 29, 30, 31,\\ 32, 33, 
34, 36\end{array}$\\
\hline
$25$& $\begin{array}{l}10, 12, 13, 14, 16, 17, 18, 19, 20, 21, 22, 23, 24, 25, 26, 27, 28, 29, 30, 31, 32,\\ 33, 34, 
35, 36, 38\end{array}$\\
\hline
$27$& $\begin{array}{l}12, 15, 16, 18, 19, 21, 22, 24, 25, 27, 28, 30, 31, 33, 34, 36, 37, 40\end{array}$\\
\hline
$29$& $\begin{array}{l}15, 16, 17, 18, 19, 20, 21, 22, 23, 24, 25, 26, 27, 28, 29, 30, 31, 32, 33, 34, 35,\\ 36, 37, 
38, 40, 41\end{array}$\\
\hline
$31$& $\begin{array}{l}14, 16, 17, 18, 19, 20, 21, 22, 23, 24, 25, 26, 27, 28, 29, 30, 31, 32, 33, 34, 35,\\ 36, 37, 
38, 39, 40, 41, 42, 43, 44, 46\end{array}$\\
\hline
$37$& $\begin{array}{l}18, 22, 23, 24, 25, 26, 27, 28, 29, 30, 31, 32, 33, 34, 35, 36, 37, 38, 39, 40, 41,\\ 42, 43, 
44, 45, 46, 47, 48, 49, 50, 51, 53\end{array}$\\
\hline
$41$& $\begin{array}{l}22, 23, 24, 25, 26, 27, 28, 29, 30, 31, 32, 33, 34, 35, 36, 37, 38, 39, 40, 41, 42,\\ 43, 44, 
45, 46, 47, 48, 49, 50, 51, 52, 53, 54, 55, 56, 58\end{array}$\\
\hline
$43$& $\begin{array}{l}18, 24, 25, 26, 27, 28, 29, 30, 31, 32, 33, 34, 35, 36, 37, 38, 39, 40, 41, 42, 43,\\ 44, 45, 
46, 47, 48, 49, 50, 51, 52, 53, 54, 55, 56, 58, 59, 60, 64\end{array}$\\
\hline
$47$& $\begin{array}{l}23, 24, 27, 28, 29, 30, 31, 32, 33, 34, 35, 36, 37, 38, 39, 40, 41, 42, 43, 44,45,\\  46, 47, 
48, 49, 50, 51, 52, 53, 54, 55, 56, 57, 58, 59, 60, 61, 62, 64, 65, 66\end{array}$\\
\hline
$49$& $\begin{array}{l} 30, 31, 32, 33, 34, 35, 36, 37, 38, 39, 40, 41, 42, 43, 44, 45, 46, 47, 48, 49, 50,\\ 51, 52, 
53, 54, 55, 56, 57, 58, 59, 60, 61, 62, 63, 64, 65, 66, 68\end{array}$\\
\hline
\end{tabular}
\end{table}

\begin{table}
\caption{Number $N_{11,7,q}$ of affine $\mathbb{F}_q$-rational points of $y^2x^{2}-(x^{11}+\alpha x^2+\alpha x+\beta)=0$, $\alpha\neq\beta\in \mathbb{F}_q^*$}
\label{Table:2}
\centering
\begin{tabular}{|c|l|}
\hline
$q$&$N_{11,7,q}$\\
\hline
\hline
$11$& $\begin{array}{l} 1, 8, 10, 12, 19\end{array}$\\
\hline
$13$& $\begin{array}{l}4, 5, 7, 8, 9, 10, 11, 12, 13, 14, 15, 16, 17, 18\end{array}$\\
\hline
$17$& $\begin{array}{l}6, 8, 9, 10, 11, 12, 13, 14, 15, 16, 17, 18, 19, 20, 21, 22, 23, 24\end{array}$\\
\hline
$19$& $\begin{array}{l}6, 8, 9, 10, 12, 13, 14, 15, 16, 17, 18, 19, 20, 21, 22, 23, 24, 25, 26, 28, 30\end{array}$\\
\hline
$23$& $\begin{array}{l}8, 9, 10, 12, 13, 14, 15, 16, 17, 18, 19, 20, 21, 22, 23, 24, 25, 26, 27, 28, 29, 30, 31,\\ 32,
33, 35, 36\end{array}$\\
\hline
$25$& $\begin{array}{l}13, 14, 15, 16, 17, 18, 19, 20, 21, 22, 23, 24, 25, 26, 27, 28, 29, 30, 31, 32, 33, 34,\\ 35, 
36 \end{array}$\\
\hline
$27$& $\begin{array}{l}13, 14, 15, 16, 17, 18, 19, 20, 21, 22, 23, 24, 25, 26, 27, 28, 29, 30, 31, 32, 33, 34,\\ 35, 
36, 37, 38\end{array}$\\
\hline
$29$& $\begin{array}{l}12, 14, 15, 16, 17, 18, 19, 20, 21, 22, 23, 24, 25, 26, 27, 28, 29, 30, 31, 32, 33, 34,\\ 35, 
36, 37, 38, 39, 40, 41, 42, 43\end{array}$\\
\hline
$31$& $\begin{array}{l}15, 16, 17, 19, 20, 21, 22, 23, 24, 25, 26, 27, 28, 29, 30, 31, 32, 33, 34, 35, 36, 37,\\ 38, 
39, 40, 41, 42, 43, 44, 51\end{array}$\\
\hline
$37$& $\begin{array}{l} 14, 18, 20, 21, 22, 23, 24, 25, 26, 27, 28, 29, 30, 31, 32, 33, 34, 35, 36, 37, 38, 39,\\ 40, 
41, 42, 43, 44, 45, 46, 47, 48, 49, 50, 51, 54, 56 \end{array}$\\
\hline
$41$& $\begin{array}{l}23, 26, 27, 28, 29, 30, 31, 32, 33, 34, 35, 36, 37, 38, 39, 40, 41, 42, 43, 44, 45, 46,\\ 47, 
48, 49, 50, 51, 52, 53, 54, 55, 56, 57, 59, 60, 64\end{array}$\\
\hline
$43$& $\begin{array}{l}21, 24, 26, 27, 28, 29, 30, 31, 32, 33, 34, 35, 36, 37, 38, 39, 40, 41, 42, 43, 44, 45,\\ 46, 
47, 48, 49, 50, 51, 52, 53, 54, 55, 56, 57, 58, 60, 61\end{array}$\\
\hline
$47$& $\begin{array}{l}24, 25, 26, 27, 28, 29, 30, 31, 32, 33, 34, 35, 36, 37, 38, 39, 40, 41, 42, 43, 44, 45,\\ 46, 
47, 48, 49, 50, 51, 52, 53, 54, 55, 56, 57, 58, 59, 60, 61, 62, 63, 64, 68\end{array}$\\
\hline
$49$& $\begin{array}{l} 22, 30, 31, 32, 33, 34, 35, 36, 37, 38, 39, 40, 41, 42, 43, 44, 45, 46, 47, 48, 49, 50,\\ 51, 
52, 53, 54, 55, 56, 57, 58, 59, 60, 61, 62, 63, 64, 65, 68\end{array}$\\
\hline
\end{tabular}
\end{table}

\section*{Acknowledgments}
The research of D. Bartoli  was partially supported  by the Italian National Group for Algebraic and Geometric Structures and their Applications (GNSAGA - INdAM). Part of this work was done while the first author was visiting University of South Florida.

\end{document}